\DeclareMathOperator{\link}{link}
\DeclareMathOperator{\cone}{cone}
\DeclareMathOperator{\conv}{conv}
\DeclareMathOperator{\relint}{relint}
\DeclareMathOperator{\rk}{rk} \DeclareMathOperator{\pyr}{pyr}
\DeclareMathOperator{\Tor}{Tor}
\newcommand{\Zo}{\mathbb{Z}}
\newcommand{\Ro}{\mathbb{R}}
\newcommand{\Co}{\mathbb{C}}
\newcommand{\ko}{\Bbbk}
\newcommand{\Zt}{\Zo_2}
\newcommand{\wh}[1]{{\widehat{#1}}}
\newcommand{\F}{\mathcal{F}}
\newcommand{\Hr}{\tilde{H}}
\newcommand{\Ss}{\mathbb{S}}
\newcommand{\Z}{\mathcal{Z}}
\newcommand{\Zr}{_{\Ro}\mathcal{Z}}
\newcommand{\sr}{s_{\Ro}}
\newcommand{\ts}{\sigma}
\newcounter{stmcounter}[section]
\newcounter{defcounter}[section]
\numberwithin{equation}{section}
\renewcommand{\thestmcounter}{\thesection.\arabic{stmcounter}}
\newcommand{\ex}{\par\vspace{0.5 cm}\noindent\refstepcounter{stmcounter}\textsc{Example \thestmcounter.}\quad}
\newcommand{\rem}{\par\vspace{0.5 cm}\noindent\refstepcounter{stmcounter}\textsc{Remark \thestmcounter.}\quad}
\newtheorem{cor}[stmcounter]{Corollary}
\newtheorem{thm}[stmcounter]{Theorem}
\newtheorem{prop}[stmcounter]{Proposition}
\newtheorem{lemma}[stmcounter]{Lemma}
\newtheorem{defin}[stmcounter]{Definition}
\newtheorem{claim}[stmcounter]{Claim}
\begin{document}

\title{Simplicial complexes Alexander dual to boundaries of polytopes}
\author{Anton Ayzenberg}
\address{Osaka City University}
\email{ayzenberga@gmail.com}

\begin{abstract}
In the paper we treat Gale diagrams in a combinatorial way. The
interpretation allows to describe simplicial complexes which are
Alexander dual to boundaries of simplicial polytopes and, more
generally, to nerve-complexes of general polytopes. This technique
and recent results of N.Yu.Erokhovets are combined to prove the
following: Buchstaber invariant $s(P)$ of a convex polytope equals
1 if and only if $P$ is a pyramid. In general, we describe a
procedure to construct polytopes with $\sr(P)\geqslant k$. The
construction has purely combinatorial consequences. We also apply
Gale duality to the study of bigraded Betti numbers and
$f$-vectors of polytopes.
\end{abstract}

\maketitle

\section{Introduction}\label{SecIntro}

Gale duality is a classical notion in convex geometry. Since its
appearance in \cite{Gale} it allowed to prove many strong and
nontrivial results for convex polytopes and configurations of
points on a sphere (the survey of this field can be found in
\cite{Gr}). In this paper we describe a surprisingly simple
connection between Gale diagrams and combinatorial Alexander
duality.

For any set of points on a sphere $\Ss^r$ we associate a covering
of $\Ss^r$ by hemispheres. From the theory of Gale duality follows
that the nerves of such coverings are exactly those complexes,
which are Alexander dual to boundaries of simplicial polytopes,
or, more generally, to nerve-complexes of polytopes (see claim
\ref{claimGaleAlex} for the precise statement). On one hand, this
gives a combinatorial characterization of complexes dual to
boundaries of simplicial polytopes. On the other hand, geometrical
considerations, involving coverings by hemispheres allowed to
prove particular statements about convex polytopes.

In section \ref{SecSimpConstr} we review and define basic
constructions, used in the work. These include Alexander duality
for simplicial complexes; nerve-complexes of nonsimplicial
polytopes and the construction of a \emph{constellation complex}
for a configuration of points on a sphere. In section
\ref{SecConstel} are listed the most important topological and
combinatorial properties of constellation complexes. In section
\ref{SecGale} the Gale duality is applied to show that
constellation complexes are Alexander dual to nerve-complexes of
polytopes. Alexander duality allows to simplify and treat
topologically many well known results.

In section \ref{SecBetti} we provide basic definitions from
commutative algebra. Arguments, similar to those used by Eagon and
Reiner in \cite{EaRe} are applied to constellation complexes. We
use Hochster formula to show that the Stanley--Reisner ideal of a
constellation complex $\Delta(X)$ has a linear resolution. This
means that all generators of modules in the minimal resolution are
concentrated in prescribed degrees. Alexander duality leads to the
following result: if $X$ is a Gale diagram of a simplicial
polytope $P$, then bigraded Betti numbers of the constellation
complex $\Delta(X)$ coincide with the $f$-vector of $P$
(proposition \ref{propBettiConstel}). This correspondence can be
naturally generalized to polytopes which are not simplicial
(proposition \ref{propBettiConstelGen}). On the other hand, one
can calculate bigraded Betti numbers of a polytope $P$ by studying
the combinatorial topology of $\Delta(X)$.

Originally, this research was motivated by the study of Buchstaber
invariant (definition \ref{definBuchNumber}). This invariant of
simplicial complexes and polytopes appeared naturally in toric
topology, and in 2002 V.M.Buchstaber posed a problem: to describe
this number combinatorially.

Since then several approaches to this problem had been developed.
I.Izmestiev \cite{Izm1,Izm2} found a connection between Buchstaber
invariants and a chromatic number. This connection was further
developed in work \cite{Ayzs}. In the paper \cite{FM} real
Buchstaber invariant of skeleta of simplices was determined by
integer linear programming. N.Yu.Erokhovets in his thesis
\cite{ErThes}, and other works \cite{Er,ErArx,ErNew,ErNewBig}
built the theory of Buchstaber invariants, constructed many
examples and estimations, and found equivalent definitions for
these numbers. We refer the reader to his surveys \cite{ErArx} or
\cite{ErNewBig} to find out more about particular results and open
problems in this field.

Recent result \cite{ErNew} and Gale duality allowed to prove the
following conjecture, made in \cite{ABarx}. If $P$ is a polytope,
then $s(P)=1$ if and only if $P$ is a pyramid (theorem
\ref{thmPyramidS}). Result of \cite{ErNew} can also be applied to
construct polytopes with $\sr(P)\geqslant k$ from their Gale
diagrams. This consideration had led to interesting combinatorial
consequences (theorem \ref{propFanoCircle} and statement
\ref{propFanoCircle}).

We hope that Gale diagrams will allow to answer the question posed
by Nickolai Erokhovets in \cite{ErNewBig}: whether Buchstaber
invariant is determined by bigraded Betti numbers of a simplicial
complex? Probably, Gale diagrams will lead to the solutions of
other open problems concerning Buchstaber invariants of polytopes.

The author is grateful to professor V.\,M.\,Buchstaber for his
suggestion to consider the invariant $s$ on the class of nonsimple
polytopes and for his interest to this work. Also I wish to thank
the participants of the student geometry and topology seminar in
Moscow State University. A few talks made at this seminar turned
out to be very useful for the understanding of topics mentioned in
this paper.

\section{Simplicial complexes from polytopes and spherical configurations}\label{SecSimpConstr}

\subsection{Simplicial complexes}

Let $K$ be a simplicial complex on a set of vertices
$[m]=\{1,\ldots,m\}$. In the following the complex and its
geometrical realization are denoted by the same letter for the
sake of simplicity. By $\Delta_{[m]}$ or $\Delta^{m-1}$ we denote
the simplex on a set $[m]$.

Suppose, $K\neq \Delta_{[m]}$. In this case the \emph{dual
complex} $\wh{K}$ (or $K^{\wedge}$) is a complex on a set $[m]$
defined by
$$
\wh{K}=\{I\in [m]\mid [m]\setminus I\notin K\}.
$$
Obviously, double dual $K^{\wedge\wedge}$ coincides with $K$. In
the literature (e.g. \cite{BP}) this duality is also called
combinatorial Alexander duality, since both $K$ and $\wh{K}$ can
be embedded in barycentric subdivision
$(\partial\Delta_{[m]})'\cong S^{m-2}$ as Alexander dual
subcomplexes (see \cite[sec. 2.4]{BPnew}). As a consequence,
\begin{equation}\label{eqAlexHomol}
\Hr_i(K;\ko)\cong\Hr^{m-3-i}(\wh{K};\ko),
\end{equation}
where $\ko$ is a field or $\Zo$.

For a simplicial complex $K$ on a set $[m]$ the following notions
and notation will be used in the paper:
\begin{itemize}
\item If $J\subseteq [m]$, then $K_J$ is a \emph{full subcomplex} on a
set $J$. Its simplices are those simplices of $K$ which are
subsets of $J$.

\item If $I\in K$ is a simplex, then its \emph{link} is a complex on a set
$[m]\setminus I$ defined by $\link_KI=\{J\subseteq [m] \setminus
I\mid I\sqcup J\in K\}$.

\item If $i\in [m]$, but $\{i\}\notin K$, then $i$ is called the
\emph{ghost vertex} of $K$. Ghost vertices do not affect the
geometry of simplicial complex but they make combinatorial
reasoning simpler. In particular, by definition, links usually
have many ghost vertices.

\item A set $I\subseteq [m]$ is called a \emph{minimal nonsimplex} (it is
also called a missing face in the literature) if $I\notin K$, but
any proper subset of $I$ is a simplex. The set of all minimal
nonsimplices of $K$ will be denoted by $N(K)$.

\item $K^{(l)}$ denotes $l$-dimensional skeleton of $K$.

\item $\Hr(\varnothing;\ko)\cong \ko$, where $\varnothing$ is a complex,
which do not have nonempty simplices.
\end{itemize}

One can show that
\begin{equation}\label{eqLinkScAlex}
(\link_KI)^{\wedge} = \wh{K}_{[m]\setminus I} \mbox{ and }
(K_J)^{\wedge} = \link_{\wh{K}}([m]\setminus J),
\end{equation}
when $I\in K$ and $J\notin K$. Note, that these conditions imply
$[m]\setminus I \notin \wh{K}$ and $[m]\setminus J\in \wh{K}$,
which makes all objects well-defined.

\subsection{Polytopes and nerve-complexes}

Now consider a convex polytope $P\subset \Ro^d$, given as a convex
hull of its vertices $P=\conv\{y_1,\ldots,y_m\}$. Suppose $\dim P
= d$. For such a polytope construct an abstract simplicial complex
$K(P)$ on a set $[m]$. Its simplices are those subsets
$I=\{i_1,\ldots,i_k\}\subseteq [m]$ for which corresponding
vertices $y_{i_1},\ldots,y_{i_k}$ belong to a common facet of $P$.

Obviously, if $P$ is simplicial, then $K(P)$ coincides with the
boundary $\partial P$. In general, when $P$ is not simplicial,
$K(P)$ has more complicated structure --- it is not a simplicial
sphere and it can be non-pure (see fig. \ref{pictPrismNerve}).
Such complexes were called \emph{nerve-complexes} and their
properties were described in \cite{AB}.

\ex Let $P$ be a triangular prism. The complex $K(P)$ and its
maximal simplices are illustrated in fig. \ref{pictPrismNerve}

\begin{figure}[h]
\begin{center}
\includegraphics[scale=0.2]{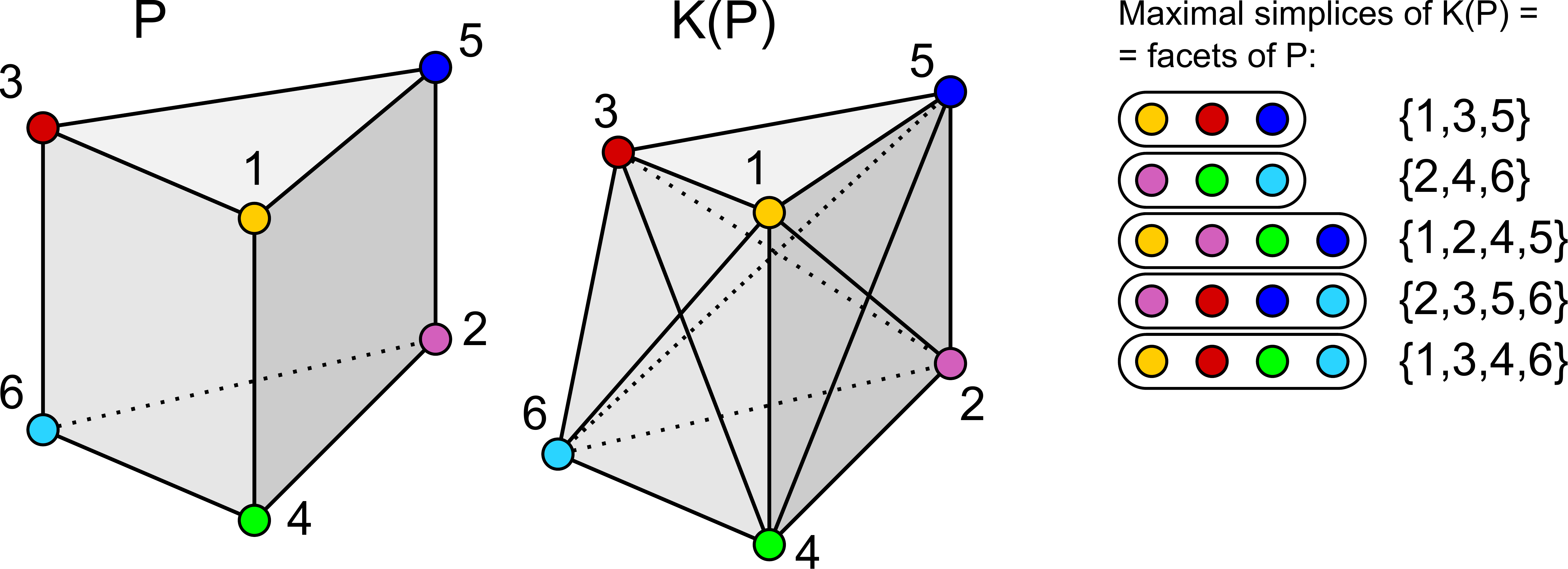}
\end{center}
\caption{Complex $K(P)$ for a triangular
prism.}\label{pictPrismNerve}
\end{figure}

Another way to define nerve-complexes (which explains their name)
is the following. Consider a polytope $Q$ with facets
$\F_1,\ldots,\F_m$. Define a simplicial complex $K_Q$ on a set
$[m]$ as a nerve of the cover $\partial Q = \bigcup\F_i$. In other
words, $\{i_1,\ldots,i_k\}\in K_Q$ iff facets
$\F_{i_1},\ldots,\F_{i_k}$ intersect. Then, $K(P^*) = K_{P}$,
where $P^*$ is a polar dual polytope to $P$. In \cite{AB} we used
$K_Q$-construction rather than $K(P)$-construction for toric
topology reasons, but the outcome of these constructions is the
same. In particular, it is proved that $K(P)$ defines the face
lattice of a polytope $P$ uniquely. So far there is no loss of
combinatorial information when $K(P)$ is considered instead $P$.

\subsection{Sphere diagrams and constellation complexes}

Let $\Ss^r$ be a unit sphere in euclidian space: $\Ss^r=\{x\in
\Ro^{r+1}\mid |x|=1\}$. The notation $\Ss$ is reserved for
geometrical object; the letter $S$ is used for sphere as a
topological space or a homotopy type.

For a point $x\in \Ss^r\sqcup\{0\}$ define a subset $H(x)=\{y\in
\Ss^r\mid \langle x,y\rangle>0\}$. If $x=0$, then $H(x)$ is empty.
If $x\in \Ss^r$, the set $H(x)$ is the open hemisphere,
corresponding to $x$.

Let $X=\{x_1,\ldots,x_m\}$ be a collection of points, $x_i\in
\Ss^r\sqcup \{0\}$ (repetitions are allowed, so $X$ is a
multiset). Such configurations also appear as \emph{spherical
codes} in the literature in connection with geometrical
optimization problems (e.g. \cite{CS}). Consider the covering
$\bigcup_iH(x_i)$.

\begin{defin}[Constellation complex]
The nerve $\Delta(X)$ of this covering will be called the
constellation complex of a configuration $X$. It means that
$\Delta(X)$ is a simplicial complex on a set $[m]$, and
$\{i_1,\ldots,i_k\}\in \Delta(X)$ iff $H(x_{i_1}) \cap\ldots\cap
H(x_{i_k})\neq \varnothing$.
\end{defin}

\rem The condition $H(x_{i_1}) \cap\ldots\cap H(x_{i_k})\neq
\varnothing$ means that there exist $y\in \Ss^r$ such that
$\langle y, x_{i_t}\rangle>0$. So far this is equivalent to
$x_{i_1},\ldots, x_{i_k}\in H(y)$. Therefore, the set of indices
$\{i_1,\ldots,i_k\}$ forms a simplex iff corresponding points
$x_{i_1},\ldots, x_{i_k}$ lie in a common open hemisphere. This
explains the terminology: it seems reasonable to call a set of
stars on a celestial sphere a \emph{constellation} if they can be
observed from some point on earth at the same time. Similar
considerations and comparison also appeared in \cite{Marc}.

\rem If $x_i=0$, the vertex $i$ is the ghost vertex of
$\Delta(X)$.

\ex A few examples of constellation complexes for points on
$\Ss^2$ are represented in fig.\ref{pict2sphere}. In the last
image we took $4$ points, which contain $0$ in their convex hull.
In this case any three hemispheres intersect, but not four:
$\Delta(X) = \partial \Delta^3$.

\begin{figure}[h]
\begin{center}
\includegraphics[scale=0.2]{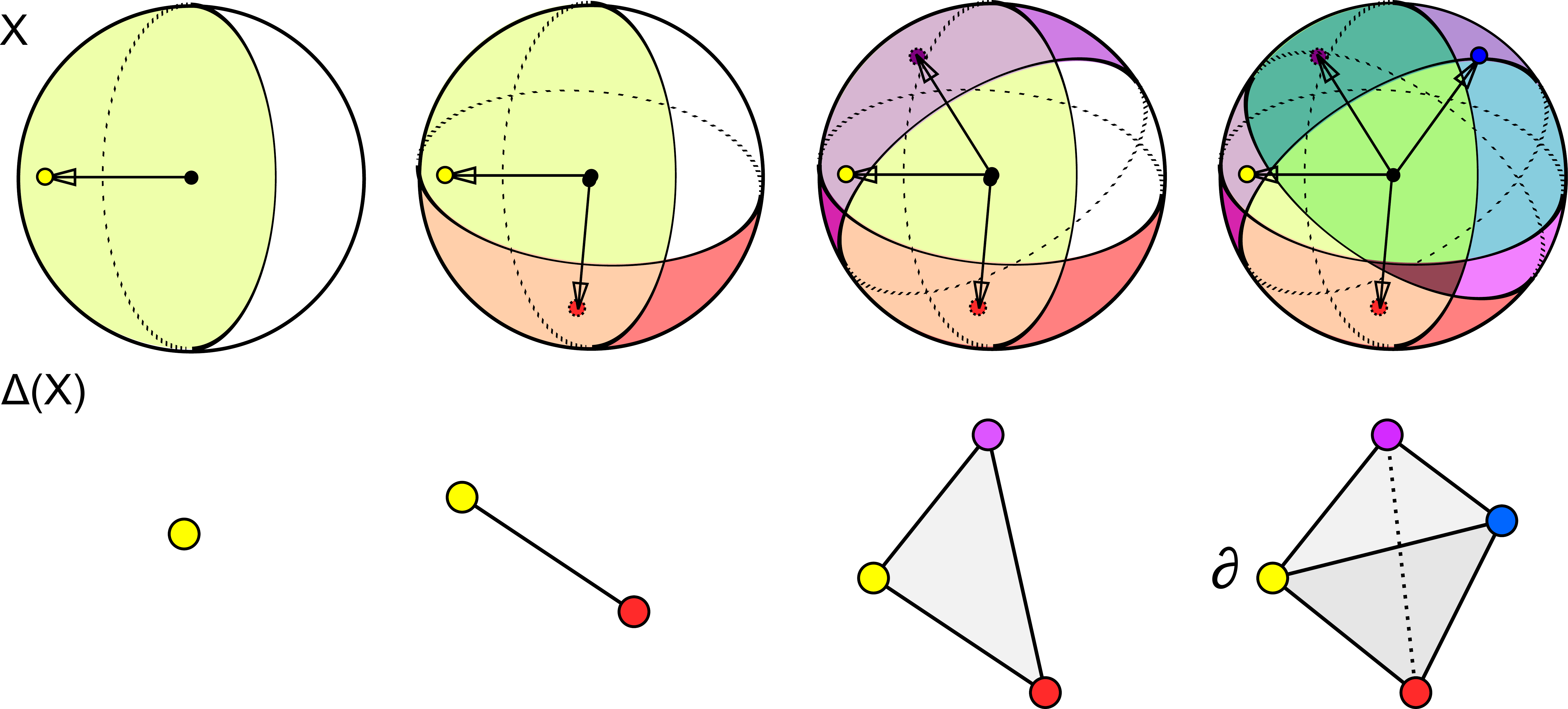}
\end{center}
\caption{Examples of simplest constellation
complexes.}\label{pict2sphere}
\end{figure}

In the following we suppose that $\bigcup_iH(x_i) = \Ss^r$, so
every point on a sphere is covered by some hemisphere. In this
case $\Delta(X)$ is homotopy equivalent to $S^{r}$, since the
nonempty intersections in the covering are contractible (they are
given as intersections of open cones with a sphere). In
particular, this implies $\langle X\rangle=\Ro^{r+1}$.

For a subset of labels $I=\{i_1,\ldots, i_k\}$ we denote the
(multi)set of points $\{x_{i_1},\ldots,x_{i_k}\}$ by $X(I)$.

\begin{claim}\label{claimCaratheodory}
Let $X\subset \Ss^r\sqcup \{0\}$. Subset $I$ is a nonsimplex of
$\Delta(X)$ if and only if the points $X(I)$ contain $0$ in their
convex hull. The cardinality of any minimal nonsimplex does not
exceed $r+2$.
\end{claim}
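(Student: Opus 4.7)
The plan is to establish the two assertions separately, using the hyperplane separation theorem for the characterization and then Carathéodory's theorem (as the name of the claim suggests) for the cardinality bound.

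For the first assertion, I would unfold the definition: $I=\{i_1,\dots,i_k\}$ is a simplex of $\Delta(X)$ iff there exists a unit vector $y \in \Ss^r$ with $\langle y, x_{i_t}\rangle > 0$ for every $t$. Dropping the normalization (since such a $y$ can always be rescaled), this is equivalent to the open convex cone $\{y\in\Ro^{r+1}:\langle y,x_{i_t}\rangle>0 \text{ for all } t\}$ being nonempty, which is the content of Gordan's theorem of alternatives: such a $y$ exists if and only if $0\notin\conv X(I)$. I would prove both directions directly: if $0 \notin \conv X(I)$, the strict separation theorem applied to the point $\{0\}$ and the compact convex set $\conv X(I)$ yields the required $y$; conversely, if $y$ exists, then for any convex combination $\sum\lambda_t x_{i_t}=0$ with $\lambda_t\geq 0$ and $\sum\lambda_t=1$, pairing with $y$ gives $\sum\lambda_t\langle y,x_{i_t}\rangle=0$, contradicting $\langle y,x_{i_t}\rangle>0$. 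The edge case $x_{i_t}=0$ for some $t$ is consistent: then $H(x_{i_t})=\varnothing$ so $I$ is trivially a nonsimplex, and also $0\in X(I)\subset\conv X(I)$.

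For the second assertion, suppose $I$ is a minimal nonsimplex. Then $0\in\conv X(I)$ by the first part. Carathéodory's theorem in $\Ro^{r+1}$ states that any point in the convex hull of a set lies in the convex hull of at most $r+2$ of its points; hence there is a subset $I'\subseteq I$ with $|I'|\leq r+2$ and $0\in\conv X(I')$. By the first part, $I'$ is a nonsimplex, and minimality of $I$ forces $I'=I$, yielding $|I|\leq r+2$.

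The only mildly delicate step is the separation argument with its handling of the origin being one of the $x_i$; once that is pinned down, the rest is a direct application of two classical theorems. I would keep the exposition compact, since both tools are standard; the substance of the claim lies in recognizing that the nerve condition on open hemispheres translates cleanly into the convex-hull condition on the defining directions.
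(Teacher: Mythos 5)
Your proof is correct and takes exactly the route the paper has in mind: the paper disposes of the first assertion by citing ``standard separation arguments in convex geometry'' and the second by citing Carath\'eodory's theorem, and you have simply written out those standard arguments in full (Gordan/strict separation for the equivalence, including the $x_i=0$ edge case, and Carath\'eodory in $\Ro^{r+1}$ giving the bound $r+2$).
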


The first statement follows from standard separation arguments in
convex geometry. The second one is Caratheodory's theorem (see,
e.g. \cite[Sec.2.3]{Gr}).

\ex Let $X_5$ be a configuration of $5$ points on a circle $\Ss^1$
placed in vertices of a regular pentagon (fig. \ref{pictCircle5}).
The colored arcs on the left image show the open hemispheres
corresponding to points. In this example $\Delta(X_5)$ is a
M\"{o}bius band --- any 3 consecutive points form a simplex. Claim
\ref{claimCaratheodory} is illustrated by the list of minimal
nonsimplices of $\Delta(X_5)$.

\begin{figure}[h]
\begin{center}
\includegraphics[scale=0.2]{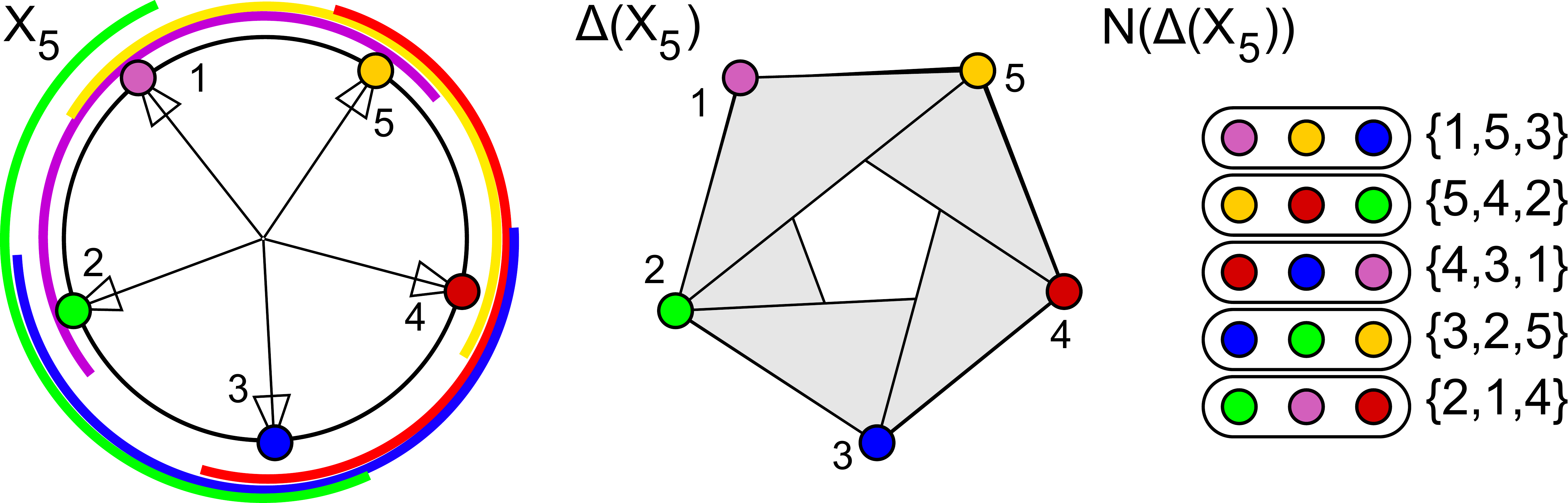}
\end{center}
\caption{Complex $\Delta(X_5)$ for 5 points on a
circle.}\label{pictCircle5}
\end{figure}

\ex Similarly, for $6$ points on $\Ss^1$ placed in vertices of a
regular hexagon we have fig. \ref{pictCircle6}.

\begin{figure}[h]
\begin{center}
\includegraphics[scale=0.2]{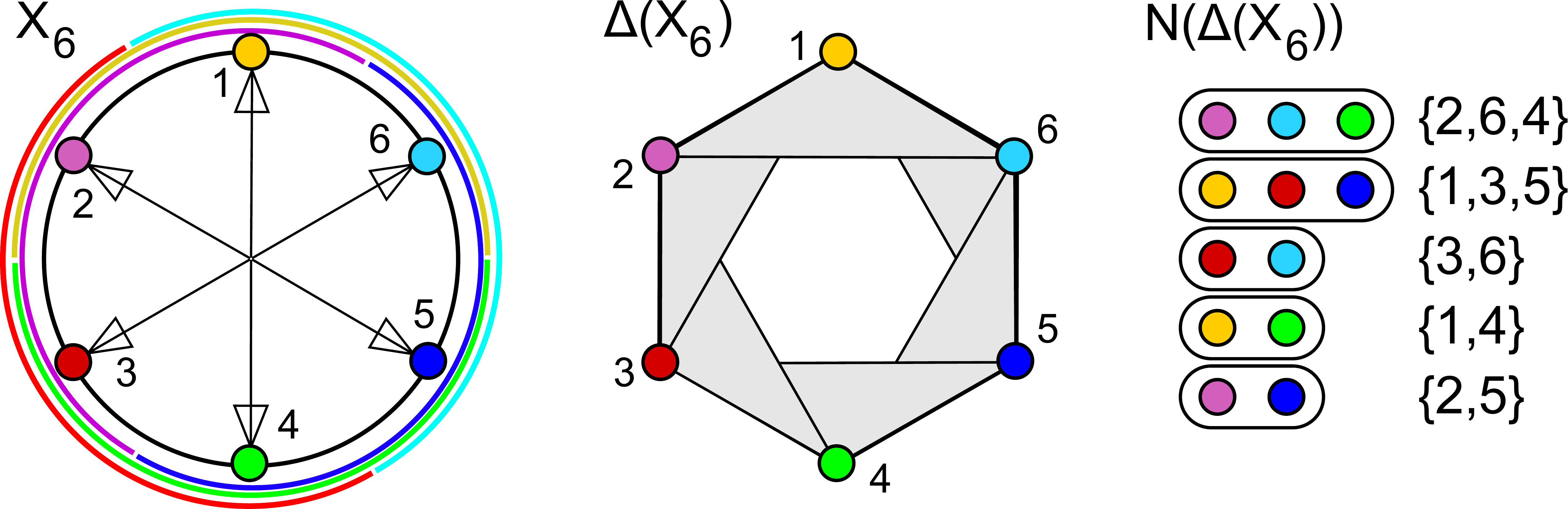}
\end{center}
\caption{Complex $\Delta(X_6)$ for 6 points on a
circle.}\label{pictCircle6}
\end{figure}

The configuration $X$ is called \emph{nondegenerate} if for each
$J$ such that $0\in \conv X(J)$ there holds $\dim \langle
X(J)\rangle = \dim \langle X\rangle = r+1$. Equivalently, $X$ is
nondegenerate if all minimal nonsimplices of $\Delta(X)$ have
cardinality exactly $r+2$.

The configuration $X_5$ on fig.\ref{pictCircle5} is nondegenerate.
Configuration $X_6$ (fig. \ref{pictCircle6}) is degenerate.

\section{Properties of constellation complexes}\label{SecConstel}

Let $K$ be a complex on a set $V$ and $L\subset K$ --- its
subcomplex. Consider a new simplicial complex $K\cup_L\cone L$ on
a set $V\sqcup \{w\}$ --- the result of attaching a cone with apex
$w$ to $K$ along $L$. The operation $K \mapsto K\cup_L\cone L$
will be called a \emph{construction step} in the case when $L$ is
a contractible space.

Any complex $K$ can be decomposed as $K_{[m]\setminus
w}\cup_{\link w}\cone\link w$ for any vertex $w$. The possibility
to make $K$ from $K_{[m]\setminus w}$ by construction step means
that $\link w$ is contractible.

\begin{prop}\label{propConstelProperties}
Let $X\subset \Ss^r$ be a nondegenerate configuration, such that
$\bigcup_{x\in X}H(x)=\Ss^r$, and $\Delta(X)$ --- its
constellation complex. Then

\begin{enumerate}

\item For each subset $I\subseteq [m]$ the full subcomplex
$\Delta(X)_I$ is either a simplex or homotopy equivalent to a
sphere $S^{r}$. If $\Delta(X)_I$ is homotopy equivalent to a
sphere, then so is $\Delta(X)_J$ for $J\supset I$.

\item $\Delta(X)$ can be obtained from $\partial\Delta^{r+1}$ by a
sequence of construction steps.

\end{enumerate}

\end{prop}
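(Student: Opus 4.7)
The plan is to exploit the straightforward identification $\Delta(X)_I=\Delta(X(I))$ (a full subcomplex of a constellation complex is the constellation complex of the corresponding sub-configuration). Part~(1) splits into two cases according as the subfamily $\{H(x_i)\}_{i\in I}$ covers $\Ss^r$ or not. When it does, the intersections $\bigcap_{j\in J}H(x_j)$ for $J\subseteq I$ are either empty or open and contractible (being radial projections of open convex cones in $\Ro^{r+1}$), so the nerve lemma gives $\Delta(X)_I\simeq S^r$. When it does not, there exists $y_0\in\Ss^r$ with $\langle y_0,x_i\rangle\leqslant 0$ for all $i\in I$, and I would upgrade $-y_0$ to a strict witness of $\bigcap_{i\in I}H(x_i)\neq\varnothing$. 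Setting $J=\{i\in I:\langle y_0,x_i\rangle=0\}$, the vectors $X(J)$ lie in $y_0^{\perp}$, so $\dim\langle X(J)\rangle\leqslant r$, and nondegeneracy (in contrapositive form) forces $0\notin\conv X(J)$. Therefore $X(J)$ can be strictly separated from $0$ by some $v\in y_0^{\perp}$, and for small $\epsilon>0$ the vector $-y_0+\epsilon v$ satisfies $\langle -y_0+\epsilon v,x_i\rangle>0$ for every $i\in I$, so $I$ itself is a simplex and $\Delta(X)_I$ is a full simplex on $I$. The second sentence of~(1) then follows by monotonicity: adjoining further hemispheres to a covering family preserves the covering property, so $\Delta(X)_J\simeq S^r$ whenever $J\supseteq I$ and the $I$-subfamily already covers.

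For~(2) I would run reverse induction on $|X|$ and reduce everything to a \emph{redundancy} observation. Call $x_i$ redundant when $\{H(x_j)\}_{j\neq i}$ still covers $\Ss^r$; then $\link_{\Delta(X)}i$ is the nerve of the good open cover $\{H(x_j)\cap H(x_i)\}_{j\neq i}$ of $H(x_i)\cong\Ro^r$, and hence is contractible by the nerve lemma. Deletion of such a vertex is therefore the inverse of a construction step, and the resulting full subcomplex $\Delta(X)_{[m]\setminus\{i\}}$ is again the constellation complex of a nondegenerate configuration covering $\Ss^r$. At the base case $|X|=r+2$, nondegeneracy forces $[r+2]$ to be the unique minimal nonsimplex, so $\Delta(X)=\partial\Delta^{r+1}$. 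The remaining claim is that $|X|>r+2$ guarantees a redundant vertex: pick any minimal nonsimplex $I$ of $\Delta(X)$; by Claim~\ref{claimCaratheodory} and nondegeneracy one has $|I|=r+2$ (Carath\'eodory gives $|I|\leqslant r+2$, while a nontrivial positive dependence among vectors spanning $\Ro^{r+1}$ requires at least $r+2$ of them), and as in part~(1) the subconfiguration $X(I)$ already covers $\Ss^r$ on its own, so every $x_j$ with $j\notin I$ is redundant.

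The main obstacle is the nondegeneracy bookkeeping in both halves: upgrading a weak separation (closed hemisphere) to a strict one (open hemisphere) in the second case of~(1), and pinning the size of a minimal positive circuit to exactly $r+2$ in~(2). With those two geometric facts in hand, the topological content is carried uniformly by the nerve lemma, applied first to the full subcomplexes and then to the links arising at each construction step.
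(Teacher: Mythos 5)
Your argument is correct and follows the same essential strategy as the paper's proof: for part (1), split on whether the subfamily of hemispheres indexed by $I$ covers $\Ss^r$ (equivalently, whether $0\in\conv X(I)$), apply the nerve theorem in the covering case, and invoke strict separation in the non-covering case; for part (2), isolate a minimal nonsimplex of size $r+2$ (which by itself already covers $\Ss^r$ with interior), and observe that the link of any extra vertex $w$ is the nerve of a cover of the contractible hemisphere $H(x_w)$, hence contractible. The two presentational differences are cosmetic rather than conceptual. In part (1) you re-derive, via the perturbation $-y_0+\epsilon v$, the separation fact that the paper delegates to Claim \ref{claimCaratheodory} together with the remark that nondegeneracy upgrades $0\in\conv X(I)$ to $0\in\relint\conv X(I)$; your version is more self-contained but proves the same equivalence. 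In part (2) you run the induction in reverse (peeling off a redundant vertex) while the paper runs it forward (attaching vertices to the initial $\partial\Delta^{r+1}$), but the key lemma --- contractibility of $\link w$ as the nerve of a good cover of $H(x_w)$ --- is identical, and both inductions are anchored by the same observation that a nondegenerate minimal positive circuit has exactly $r+2$ elements. No gaps; this is the paper's proof in lightly different clothing.
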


\begin{proof}
Note that a full subcomplex $\Delta(X)_I$ coincides with
$\Delta(X(I))$ --- the constellation complex of the smaller set.
To prove (1) consider two possibilities: $0\in \conv X(I)$ or
$0\notin\conv X(I)$. In the first case we actually have $0\in
\relint\conv X(I)$ because of nondegeneracy condition. Therefore,
open hemispheres of $X(I)$ cover $\Ss^r$ and $\Delta(X(I))$ is
homotopy equivalent to $S^r$ by the nerve theorem. In the second
case, when $0\notin\conv X(I)$, the set $X(I)$ is covered by an
open hemisphere, therefore $I\in \Delta(X)$, so
$\Delta(X)_I=\Delta_{I}$.

To prove (2) we proceed as follows. At first, find $J\subseteq
[m]$, such that $0\in\conv X(J)$ and $|J|=r+2$. It exists by
Caratheodory theorem and gives a minimal nonsimplex $J\in
N(\Delta(X))$. Therefore, $\Delta(X(J)) = \partial \Delta^{r+1}$.

Now let $I\supseteq J$. The complex $\Delta(X(I\sqcup \{w\}))$ is
obtained from $\Delta(X(I))$ by attaching a cone along
$\link_{\Delta(X(I\sqcup \{w\}))}w$. This link is given by all
$\{i_1,\ldots,i_k\}\subseteq I$ such that $H(x_w)\cap
H(x_{i_1})\cap\ldots\cap H(x_{i_k})\neq \varnothing$. This means
$\link_{\Delta(X(I\sqcup \{w\}))}w$ coincides with the nerve of
the covering of $H(x_w)$ by hemispheres corresponding to $X(J)$.
This nerve is contractible, since $H(x_w)$ is contractible.
Therefore, $\Delta(X(I\sqcup \{w\}))$ is obtained from
$\Delta(X(I))$ by construction step. Applying this operation
several times allows to build $\Delta(X)$ from
$\Delta(X)_J=\partial\Delta^{r+1}$ by a sequence of construction
steps.
\end{proof}

\rem The proof shows that there is, actually, a variety of ways to
build $\Delta(X)$ from $\partial\Delta^{r+1}$. Once the initial
nonsimplex $J$ is installed other vertices can be added in any
order.

\rem The last part of the proof works well in a more general
situation. Consider a covering $M$ of a sphere $\Ss^r$ by ``hats''
of the form $H(x,\alpha)=\{y\in \Ss^r\mid \langle
x,y\rangle>\alpha\}$ for real $\alpha$ between $0$ and $1$. If
some smaller set $N\subset M$ of hats also covers $\Ss^r$, then
the nerve of $M$ is built from the nerve of $N$ by a sequence of
construction steps. An interesting question is how complicated
could be the starting nerve $N$, say, for fixed $\alpha$.

\rem Things become more complicated for degenerate configurations.
For example, consider a configuration $X\subset\Ss^2$ from
fig.\ref{pictDegenS2}. Points $1,2$ are north and south pole
respectively. Points $3,4,5$ lie on equatorial circle and $0\in
\relint\conv\{3,4,5\}$. In this case $H(3)\cup H(4)\cup H(5) =
\Ss^2\setminus\{1,2\}$. Therefore, $\Delta(X)_{\{3,4,5\}}\simeq
S^1 \nsim S^2$. So in degenerate case may appear subcomplexes
which are not homotopy equivalent to $S^{r}$.

\begin{figure}[h]
\begin{center}
\includegraphics[scale=0.2]{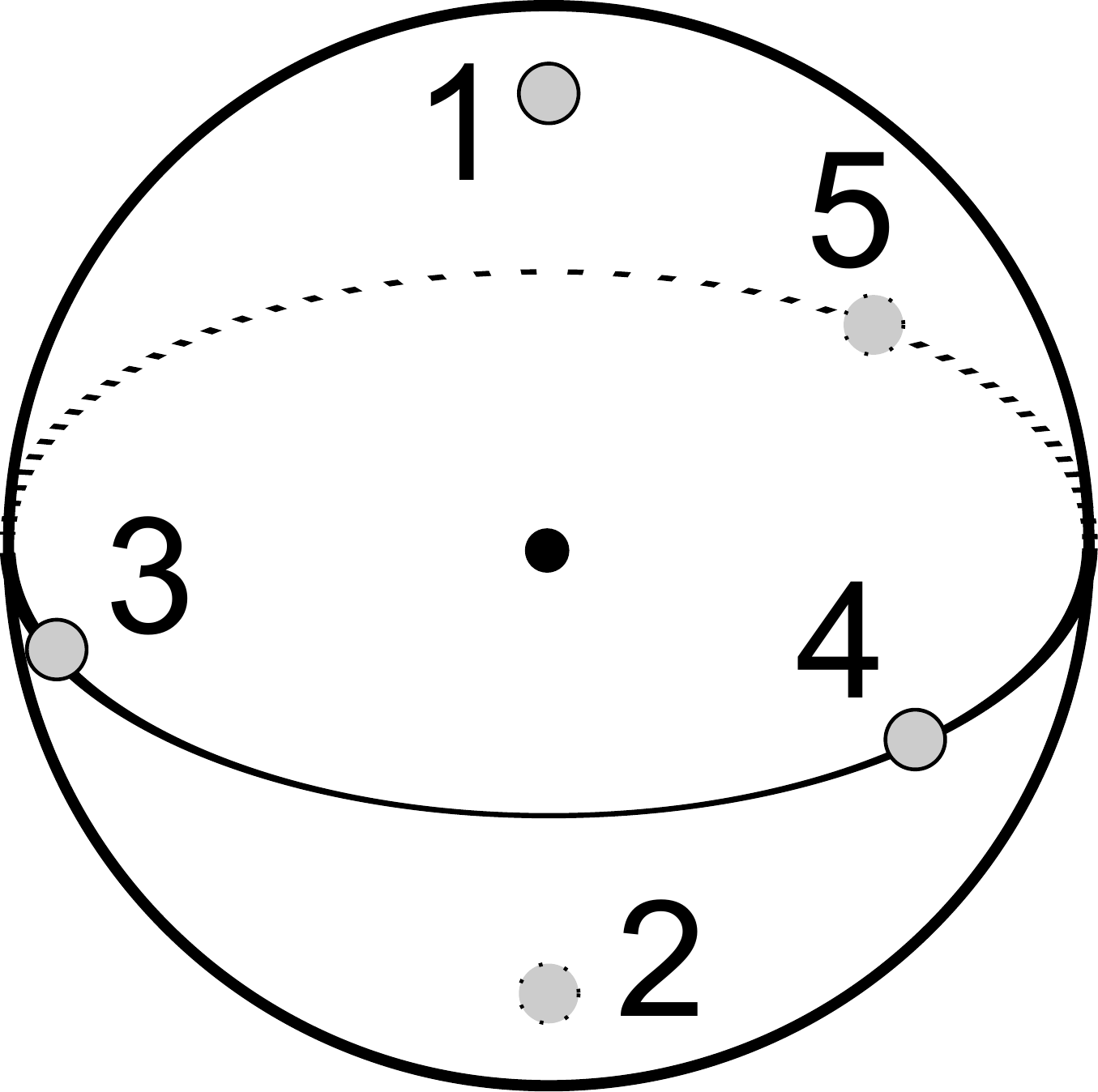}
\end{center}
\caption{Degenerate configuration of 5 points on
$\Ss^2$}\label{pictDegenS2}
\end{figure}

\section{Gale duality described combinatorially}\label{SecGale}

Two objects, defined in section \ref{SecSimpConstr} ---
nerve-complexes and constellation complexes are strongly related.
Roughly speaking, they are Alexander dual to each other.

Let $X = \{x_1,\ldots,x_m\}$ be such a configuration of points on
$\Ss^r$ that any point $x\in \Ss^r$ lies in at least two open
hemispheres $H(x_i)$. So the covering $\bigcup_iH(x_i)$ wraps the
sphere at least twice. In this case configuration $X$ will be
called \emph{good}.

\begin{claim}\label{claimGaleAlex}\mbox{}

\begin{enumerate}
\item A complex $K$ on $m$ vertices is isomorphic to $K(P)$ for
$d$-dimensional polytope $P$ if and only if its Alexander dual
$\wh{K}$ is a constellation complex $\Delta(X)$ for a good
configuration $X\subset\Ss^{m-2-d}\sqcup \{0\}$. Such $X$ can be
constructed as affine Gale diagram $G(P)$ of a polytope $P$.

\item A complex $K$ on $m$ vertices is a boundary of a simplicial
$d$-dimensional polytope if and only if its Alexander dual
$\wh{K}$ is a constellation complex $\Delta(X)$ for a good
nondegenerate configuration $X$ on a sphere $\Ss^{m-d-2}$.

\end{enumerate}

\end{claim}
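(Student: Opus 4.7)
The plan is to base everything on the single identity $\Delta(G(P)) = \wh{K(P)}$, where $G(P)$ denotes the affine Gale diagram of a $d$-polytope $P$ with $m$ vertices $y_1,\ldots,y_m$. Once this identity is established, both halves of the claim reduce to bookkeeping. The central ingredient I will invoke is the defining property of the affine Gale transform: for any $A\subseteq [m]$,
\[
0\in\relint\conv G(P)(A) \iff [m]\setminus A \text{ is the vertex set of a proper face of } P.
\]
Combining this with the basic fact that every proper face is contained in a facet, I would deduce the equivalence $0\in\conv G(P)(I) \iff [m]\setminus I \in K(P)$: one direction takes $A$ to be the complement of a facet vertex set satisfying $V(F)\supseteq [m]\setminus I$, and the other picks $A\subseteq I$ as the support of a convex combination expressing $0$, then extends the resulting face to a facet. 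Claim \ref{claimCaratheodory} identifies the left-hand side with "$I$ is a nonsimplex of $\Delta(G(P))$", so the definition of Alexander duality yields the desired identity.

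Goodness of $G(P)$ drops out of the same dictionary: since each singleton $\{y_j\}$ is a proper face, the Gale correspondence gives $0\in\relint\conv G(P)([m]\setminus\{j\})$ for every $j$, meaning that the hemispheres $\{H(g_k): k\neq j\}$ already cover $\Ss^{m-d-2}$, so every point of the sphere is covered at least twice. For the converse direction of (1), I would start with a good $X\subset\Ss^{m-d-2}\sqcup\{0\}$, lift it to $\Ro^{m-d-1}$, and apply the inverse affine Gale transform; the classical recognition theorem for affine Gale diagrams states precisely that goodness is the condition ensuring the inverse transform produces the vertex set of some $d$-polytope $P$ with $G(P) = X$. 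The identity above then gives $K = \wh{\wh{K}} = \wh{\Delta(X)} = K(P)$.

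Part (2) is then a cardinality count. Under the identification $\wh{K(P)} = \Delta(G(P))$, minimal nonsimplices of $\Delta(G(P))$ correspond to complements of facet vertex sets of $P$, so $P$ is simplicial iff every facet has exactly $d$ vertices iff every minimal nonsimplex has cardinality $m-d = r+2$, matching the nondegeneracy condition on $X$. Since a simplicial polytope that is a nontrivial pyramid must itself be a simplex, no $g_i$ vanishes in the cases of interest and $X\subset\Ss^{m-d-2}$ honestly. The main obstacle I expect is the careful formulation of affine Gale duality for \emph{non-simplicial} polytopes, where the face/coface dictionary and the consistent treatment of ghost vertices arising from pyramid substructure in $P$ both require attention in order to make the identity $\Delta(G(P)) = \wh{K(P)}$ rigorous in full generality.
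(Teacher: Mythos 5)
Your proposal is correct and takes essentially the same approach as the paper: establish $\Delta(G(P)) = \wh{K(P)}$ via the classical face/coface dictionary of affine Gale diagrams (the chain $[m]\setminus I\notin K(P) \Leftrightarrow I$ contains no coface $\Leftrightarrow 0\notin\conv G(P)(I) \Leftrightarrow I\in\Delta(G(P))$), then read off both parts. You simply spell out the goodness verification and the cardinality count for (2) that the paper dispatches with a reference to Gr\"{u}nbaum.
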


\begin{proof}
This follows directly from the properties of affine Gale diagrams.
If $Y=\{y_1,\ldots,y_m\}\in \Ro^d$ --- the set of points, then one
can construct its Gale diagram, which is a configuration of
points: $G(Y)=X=\{x_1,\ldots,x_m\}\subset S^{m-2-d}\sqcup\{0\}$.
We refer to \cite[Sec.5.4]{Gr} for the definition of this
construction and its properties.

Let $P$ be a polytope with the set of vertices $Y =
\{y_1,\ldots,y_m\}\subset \Ro^d$, so $P=\conv Y$ and suppose $\dim
P = d$. We assert that $K(P)^{\wedge} = \Delta(G(Y))$. Indeed, let
$I\in K(P)^{\wedge}$. Equiv., $[m]\setminus I\notin K(P)$. Equiv.,
$Y([m]\setminus I)$ is not a subset of a proper face in $P$.
Equiv., $I$ does not contain a coface of $P$ (coface = complement
to the set of vertices of a face). Equiv., by Gale duality, $I$
does not contain a subset $J$ such that $0\in \relint \conv X(J)$.
Equiv., $0\notin \conv X(I)$. Equiv., $I\in \Delta(X)$.

The fact that resulting configuration $X$ should be good and the
second statement of the claim follow from the properties of Gale
diagrams, listed in \cite{Gr}.
\end{proof}

\ex\label{ex6prism} By comparing figures \ref{pictPrismNerve} and
\ref{pictCircle6} one can see that $K(P)$ from the first picture
is Alexander dual to $\Delta(X_6)$ from the second. Indeed,
maximal simplices of a complex are the complements to minimal
nonsimplices of its dual. Equivalent way of saying this: $X_6$ is
a combinatorial Gale diagram for a triangular prism.

\ex The M\"{o}bius band $\Delta(X_5)$ from fig.\ref{pictCircle5}
is Alexander dual to the boundary of a pentagon. Configuration
$X_5$ is a Gale diagram of a pentagon.
\\
\\
The following proposition is well known in the theory of Gale
duality. As before, $Y=\{y_1,\ldots,y_m\}$ --- the set of vertices
of $P=\conv Y$ and $X = \{x_1,\ldots,x_m\}\subset
\Ss^{m-d-2}\sqcup\{0\}$ is a Gale diagram of $Y$ denoted by $G(Y)$
or $G(P)$ for short.

\begin{prop}\label{propPyrZero}
\mbox{}
\begin{enumerate}
\item  If $P = \triangle^{m-1}$, then $G(P) =
\{0,\ldots,0\}\subset\Ro^0$.

\item $P$ is a pyramid with apex $y_i$ if and only if $x_i=0$.

\item More generally, $P$ decomposes as a join $P_1\ast P_2$, if and only if $G(P)$
decomposes as direct sum of $G(P_1)$ and $G(P_2)$.

\end{enumerate}
\end{prop}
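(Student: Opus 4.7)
The plan is to derive all three parts from claim \ref{claimGaleAlex} together with claim \ref{claimCaratheodory}. The key dictionary is $K(P)^{\wedge} = \Delta(G(P))$: by the definition of the Alexander dual, simplices of $K(P)$ (vertex sets of faces) correspond to nonsimplices of $\Delta(G(P))$ under complementation, and in particular the minimal nonsimplices of $\Delta(G(P))$---which by claim \ref{claimCaratheodory} are the minimal subsets $J$ with $0\in\conv X(J)$---correspond bijectively to cofaces of $P$.

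For part (1) a dimension count suffices: $\dim\triangle^{m-1} = m-1$, so by claim \ref{claimGaleAlex} the Gale diagram lies in $\Ss^{m-2-(m-1)}\sqcup\{0\} = \Ss^{-1}\sqcup\{0\} = \{0\}$, forcing $x_i = 0$ for every $i$. Equivalently, $K(\triangle^{m-1}) = \partial\Delta_{[m]}$, whose Alexander dual consists only of the empty simplex on $[m]$, so every vertex of $\Delta(G(P))$ is a ghost vertex.

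For part (2), $P$ is a pyramid with apex $y_i$ if and only if $\{y_j : j\neq i\}$ is the vertex set of a facet of $P$, i.e.\ $[m]\setminus\{i\}\in K(P)$. By the definition of $\wh{K}$ this is the same as $\{i\}\notin \wh{K(P)} = \Delta(G(P))$, which by claim \ref{claimCaratheodory} is exactly the condition $0\in\conv\{x_i\}$, namely $x_i = 0$.

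For part (3), the facets of a join $P_1\ast P_2$ are precisely those of the form $F_1\ast P_2$ or $P_1\ast F_2$ for facets $F_i$ of $P_i$, so the cofaces of $P_1\ast P_2$ are exactly the cofaces of $P_1$ (inside $Y_1$) together with the cofaces of $P_2$ (inside $Y_2$); the whole collection of cofaces partitions along $Y = Y_1\sqcup Y_2$. Via the dictionary above, the minimal nonsimplices of $\Delta(G(P))$ partition along $Y_1\sqcup Y_2$ as well. The step I expect to be the main obstacle is to deduce from this partition that the linear spans of $G(P_1)$ and $G(P_2)$ in the ambient sphere are independent, so that $G(P)$ decomposes as a direct sum $G(P_1)\oplus G(P_2)$ after an orthogonal change of coordinates: any affine dependence mixing points from $Y_1$ and $Y_2$ would contain a minimal such dependence, which would give a minimal nonsimplex of $\Delta(G(P))$ meeting both $Y_1$ and $Y_2$, contradicting the partition. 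The converse direction is easier: if $G(P) = G(P_1)\oplus G(P_2)$, then projecting any vanishing positive combination onto each orthogonal summand shows that circuits cannot mix the two parts, so the cofaces split and $P$ itself is a join.
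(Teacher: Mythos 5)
The paper does not actually give a proof of Proposition \ref{propPyrZero}; it is introduced as ``well known in the theory of Gale duality'' with a pointer to Gr\"unbaum, so there is no in-paper argument to compare against. Your reconstruction of parts (1) and (2) is correct and clean: the dimension count in (1) and the ghost-vertex reading of (2) via $\{i\}\notin\wh{K(P)}$ both go through exactly as you describe.

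Part (3), however, has a real gap precisely at the step you flag as the ``main obstacle.'' You argue that an affine dependence mixing $Y_1$ and $Y_2$ ``would contain a minimal such dependence, which would give a minimal nonsimplex of $\Delta(G(P))$ meeting both parts.'' But a minimal linear dependence (a circuit of the vector configuration $X$) is not the same thing as a minimal nonsimplex of $\Delta(X)$. By Claim~\ref{claimCaratheodory} the minimal nonsimplices are the minimal subsets $J$ with $0\in\conv X(J)$ — that is, the \emph{positive} circuits, i.e.\ supports of dependences with all coefficients positive — whereas a minimal linear dependence can have mixed signs and then is invisible to $\Delta(X)$. So the claimed contradiction does not follow as written. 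The repair is to use the total cyclicity of a Gale diagram ($0\in\relint\conv X$, hence every element lies on some positive circuit, hence under your partition hypothesis both $X(Y_1)$ and $X(Y_2)$ are totally cyclic in their own spans). Then, given a nonzero $v\in\langle X(Y_1)\rangle\cap\langle X(Y_2)\rangle$, write $v$ as a positive combination of $X(Y_1)$ and $-v$ as a positive combination of $X(Y_2)$ and add; this produces a genuinely positive dependence whose support meets both $Y_1$ and $Y_2$, and hence a minimal nonsimplex meeting both — the contradiction you wanted. Alternatively, the forward implication of (3) is obtained more directly by writing down the join embedding and observing that the space of affine dependences of $Y_1\ast Y_2$ splits as the direct sum of those of $Y_1$ and of $Y_2$, so the Gale transform splits immediately. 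Your argument for the converse direction is fine, though you should add one sentence noting that a polytope's face lattice is reconstructed from the set of cofaces, so ``cofaces split'' does give ``$P$ is combinatorially a join.''
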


\ex The Gale diagram of a square is the multiset
$\{-1,-1,1,1\}\subset \Ss^0$. The Gale diagram of a pyramid over a
square is $\{-1,-1,1,1,0\}\subset \Ss^0\sqcup\{0\}$.
\\
\\
The following proposition is also well known (original paper of
Gale \cite{Gale} or \cite[Cor.5]{Marc}). We provide a simple
reformulation in terms of Alexander duality. Recall, that a
polytope $P$ is called $k$\emph{-neighborly} if any $k$ of its
vertices belong to a common proper face of $P$.

\begin{prop}
Let $P$ be a polytope with $m$ vertices and $X$ --- its Gale
diagram. Then $P$ is $k$-neighborly if and only if
$\dim\Delta(X)\leqslant m-k-2$.
\end{prop}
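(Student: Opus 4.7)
The plan is to unpack both sides of the equivalence as conditions on subsets of $[m]$ and then translate between them via Alexander duality, using claim \ref{claimGaleAlex} to identify $\Delta(X)$ with $\widehat{K(P)}$.

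First I would rewrite $k$-neighborliness in terms of $K(P)$. Recall $J\in K(P)$ means the vertices $\{y_j\mid j\in J\}$ lie on a common facet; since every proper face is contained in a facet, $P$ is $k$-neighborly if and only if every $J\subseteq [m]$ with $|J|\leqslant k$ lies in $K(P)$, which is equivalent (by downward closure) to the statement that every $k$-subset of $[m]$ is a simplex of $K(P)$.

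Next I would pass through Alexander duality. By claim \ref{claimGaleAlex}, $\widehat{K(P)}=\Delta(X)$. Using the defining equivalence $I\in\widehat{K(P)}\iff [m]\setminus I\notin K(P)$ and taking complements, the condition ``every $J\subseteq[m]$ with $|J|\leqslant k$ belongs to $K(P)$'' becomes ``every $I\subseteq[m]$ with $|I|\geqslant m-k$ fails to belong to $\Delta(X)$.'' This is precisely the statement that $\Delta(X)$ contains no simplex on $m-k$ or more vertices, i.e.\ $\dim\Delta(X)\leqslant m-k-2$. Running the chain of equivalences in both directions finishes the proof.

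The only mild subtlety is to make sure the Alexander dual is well-defined: one needs $K(P)\neq\Delta_{[m]}$, which holds as soon as $P$ is not a simplex (and for $P=\Delta^{m-1}$ the statement is interpreted trivially using the convention $\dim\varnothing=-1$, consistent with proposition \ref{propPyrZero}(1)). There is no real obstacle beyond bookkeeping of the correspondences $|J|\leftrightarrow m-|I|$ and face $\leftrightarrow$ facet.
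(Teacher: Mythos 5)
Your argument is correct and is essentially the paper's proof, just spelled out at the level of individual subsets rather than with the skeleton notation: the paper phrases $k$-neighborliness as $\Delta_{[m]}^{(k-1)}\subseteq K(P)$, applies the inclusion-reversing property of Alexander duality to get $\Delta(X)\subseteq\Delta_{[m]}^{(m-k-2)}$, and reads off the dimension bound, exactly as you do element-by-element. One small inaccuracy in your closing remark: $K(P)$ is never the full simplex $\Delta_{[m]}$ (the entire vertex set of a positive-dimensional polytope never lies on one facet; for $P=\Delta^{m-1}$ one gets $K(P)=\partial\Delta_{[m]}$, dual to the complex with only ghost vertices), so the well-definedness of $\widehat{K(P)}$ is automatic and no special case is needed.
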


\begin{proof}
The condition for $P$ to be $k$-neighborly is equivalent to
$\Delta_{[m]}^{(k-1)}\subseteq K(P)$. Applying Alexander duality
to this inclusion gives $K(P)^{\wedge}\subseteq
\left(\Delta_{[m]}^{(k-1)}\right)^{\wedge}$ which can be rewritten
as $\Delta(X)\subseteq \Delta_{[m]}^{(m-k-2)}$. The last statement
is equivalent to $\dim\Delta(X)\leqslant m-k-2$.
\end{proof}

\rem Here we do not assume $P$ is simplicial. The statement holds
in general.

\rem Configurations $X$ of points on a sphere, for which
$\dim\Delta(X)\leqslant m-k-2$ were called \emph{positive
$(k+1)$-spanning sets} in \cite{Marc}. The notion reflects the
fact that after deleting any subset $Y\subset X$ with $|Y|=k$ the
remaining set $X\setminus Y$ contains $0$ in the interior of
convex hull. This means vectors $X\setminus Y$ span $\Ro^{r+1}$
with positive coefficients.
\\
\\
Recall that a simplicial polytope $P$ is called flag if any set of
pairwise connected vertices is a face. In other words, $P$ is flag
if $|J|=2$ for any minimal nonsimplex $J\in N(\partial P)$.

\begin{lemma}\label{lemmaFlag}
If $X$ is the Gale diagram of a simplicial polytope $P$ with $m$
vertices, then $P$ is flag if and only if $|I|=m-2$ for any
maximal simplex $I\in \Delta(X)$.
\end{lemma}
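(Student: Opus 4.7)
The plan is to use the bijection induced by Alexander duality between minimal nonsimplices of $K$ and complements of maximal simplices of $\wh{K}$, applied to $K=\partial P$ (since $P$ is simplicial, $K(P)=\partial P$) and $\wh{K}=\Delta(X)$ (by claim \ref{claimGaleAlex}). Once this bijection is in place, the equivalence between the flag condition and the cardinality condition on maximal simplices of $\Delta(X)$ is immediate.

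First I would establish the following general fact for an arbitrary simplicial complex $K$ on $[m]$ with $K\neq \Delta_{[m]}$: a set $I\subseteq [m]$ is a minimal nonsimplex of $K$ if and only if $[m]\setminus I$ is a maximal simplex of $\wh{K}$. This is a direct unwinding of the definition $\wh{K}=\{J\mid [m]\setminus J\notin K\}$: the condition $I\notin K$ translates to $[m]\setminus I\in \wh{K}$, and the condition ``every proper subset of $I$ belongs to $K$'' translates to ``every proper superset of $[m]\setminus I$ fails to belong to $\wh{K}$,'' which is exactly maximality in $\wh{K}$. Under this correspondence $|I|=2$ iff $|[m]\setminus I|=m-2$.

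Next I would specialise to $K=\partial P$, which is legal because $P$ is simplicial. By claim \ref{claimGaleAlex} (2), $\wh{\partial P}=\Delta(X)$ for the Gale diagram $X=G(P)$. The definition of flagness says that $P$ is flag iff every minimal nonsimplex $I$ of $\partial P$ satisfies $|I|=2$. Applying the bijection of the previous paragraph, this is equivalent to saying that every maximal simplex of $\Delta(X)$ has cardinality $m-2$, which is exactly the desired statement.

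I do not expect any real obstacle: the entire argument is a one-line consequence of the Alexander-duality-level dictionary between minimal nonsimplices of $K$ and maximal simplices of $\wh{K}$, so the only thing to be careful about is the bookkeeping of the complementation and the invocation of the right part of claim \ref{claimGaleAlex} (the simplicial case, where $K(P)=\partial P$). No use of the geometry of $X$ as a point configuration on a sphere is needed beyond the already established duality.
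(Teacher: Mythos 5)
Your argument is correct and is essentially the paper's own proof, which the paper compresses to the single observation that every maximal simplex of $\Delta(X)=(\partial P)^{\wedge}$ is the complement of a minimal nonsimplex of $\partial P$; you have simply unwound the same Alexander-duality dictionary in a bit more detail.
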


This follows from the fact that any maximal simplex of
$\Delta(X)=(\partial P)^{\wedge}$ is the complement to a minimal
nonsimplex of $\partial P$.

\begin{prop}
Suppose $X\subset \Ss^{r}$ is a nondegenerate good configuration
of points and $m=|X|>2(r+2)$. Then there is a maximal simplex of
$\Delta(X)$ which has less than $m-2$ points.
\end{prop}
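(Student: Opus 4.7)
The plan is to translate the statement via claim~\ref{claimGaleAlex} into a claim about flag simplicial polytopes, and then invoke the classical bound that such a polytope in dimension $d$ has at least $2d$ vertices.

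By claim~\ref{claimGaleAlex}(2), the configuration $X$ is a Gale diagram of a simplicial polytope $P$ with $m$ vertices of dimension $d=m-r-2$, and $\wh{\Delta(X)}=\partial P$. The hypothesis $m>2(r+2)$ rewrites as $m<2d$. Suppose, for contradiction, that every maximal simplex of $\Delta(X)$ has at least $m-2$ vertices. A maximal simplex of size $m-1$ is ruled out: if $y\in\bigcap_{i\neq k}H(x_i)$, then $-y$ lies in at most the single hemisphere $H(x_k)$, violating the good-configuration assumption. So every maximal simplex has size exactly $m-2$, and Lemma~\ref{lemmaFlag} implies that $P$ is flag.

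It now suffices to prove that any flag simplicial $(d-1)$-sphere has at least $2d$ vertices; I would argue by induction on $d$, with $d\leqslant 2$ immediate ($S^0$ for $d=1$; a polygon with $\geqslant 4$ vertices for $d=2$). For $d\geqslant 3$, let $K$ be a flag simplicial $(d-1)$-sphere and fix a vertex $v$. The link $L=\link_Kv$ is a simplicial $(d-2)$-sphere, and is again flag: a would-be minimal nonface $S$ of $L$ with $|S|\geqslant 3$ would make $S\cup\{v\}$ a clique in the $1$-skeleton of $K$, hence by flagness a face of $K$, contradicting $S\notin L$. By induction $L$ has at least $2(d-1)$ vertices, i.e.\ $v$ has that many neighbors in the $1$-skeleton. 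If $v$ had no non-neighbor, then every clique of $K$ combined with $v$ would itself be a clique, hence by flagness a face, so every face of $K$ would lie in the closed star of $v$; but that star is a cone on $L$ and thus contractible, while $K\simeq S^{d-1}$ is not — a contradiction. Hence $v$ has at least one non-neighbor and $|K|\geqslant 1+2(d-1)+1=2d$. Applied to $K=\partial P$ this contradicts $m<2d$, so a maximal simplex of size less than $m-2$ must exist.

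The main obstacle is the topological step showing that no vertex of a flag simplicial sphere can be adjacent to every other vertex; the remainder is Alexander-duality bookkeeping and a routine induction.
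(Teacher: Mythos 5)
Your proof is correct, and its top-level structure is the same as the paper's: reduce (via Lemma~\ref{lemmaFlag}) to the statement that a flag simplicial $d$-polytope has at least $2d$ vertices, then contradict $m<2d$. The two places you diverge are both sound and, if anything, tighten the exposition. First, you explicitly rule out maximal simplices of $\Delta(X)$ of cardinality $m$ or $m-1$ using the ``good'' hypothesis (the antipode $-y$ of a witnessing point $y$ is covered at most once), a step the paper leaves implicit when it passes from ``every maximal simplex has $\geqslant m-2$ vertices'' to Lemma~\ref{lemmaFlag}'s ``exactly $m-2$.'' Second, where the paper simply cites \cite[Lemma~2.1.14]{Gal} for the lower bound $m\geqslant 2\dim P$, you prove it from scratch by the standard link induction: the link of a vertex in a flag polytopal sphere is again a flag polytopal sphere of one dimension lower, and the vertex must have at least one non-neighbour, since otherwise $K$ would coincide with the closed star of $v$ (a cone) and be contractible. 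That induction is exactly the argument underlying Gal's lemma, so it is not a new route so much as an inlined one; the paper's version is shorter at the price of an external citation, while yours is self-contained and makes visible which hypotheses (flagness, being a sphere, polytopality so that vertex links are again spheres) are actually used.
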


\begin{proof}
Suppose the contrary. Then by lemma \ref{lemmaFlag} $X$ is a Gale
diagram for a flag simplicial polytope $P$ with $m$ vertices and
$\dim P = m-r-2>\frac{m}{2}$. It is known that any flag polytope
$P$ has at least $2\dim P$ vertices (see e.g. \cite[lemma
2.1.14]{Gal}) --- the contradiction.
\end{proof}

Another important example of using Gale diagrams is the iterated
simplicial wedge construction.

\ex In the work \cite{BBCGit} an iterated wedge operation was
defined. If $P$ is a simple $d$-polytope with $m$ facets, then a
new polytope $P(j_1,\ldots,j_m)$ is defined, which has $\sum_i
j_i$ facets and dimension $n-m+\sum_ij_i$. The corresponding
operation for simplicial complexes $\partial P^*\mapsto \partial
P^*(j_1,\ldots,j_m)$ can be described combinatorially in several
different ways (see \cite{BBCGit,Ayzit}). The Gale diagram
$G(P^*(j_1,\ldots,j_m))$ can be constructed from $G(P^*)$ by
assigning multiplicities $j_i$ to points $x_i \in G(P^*)$.

\begin{figure}[h]
\begin{center}
\includegraphics[scale=0.2]{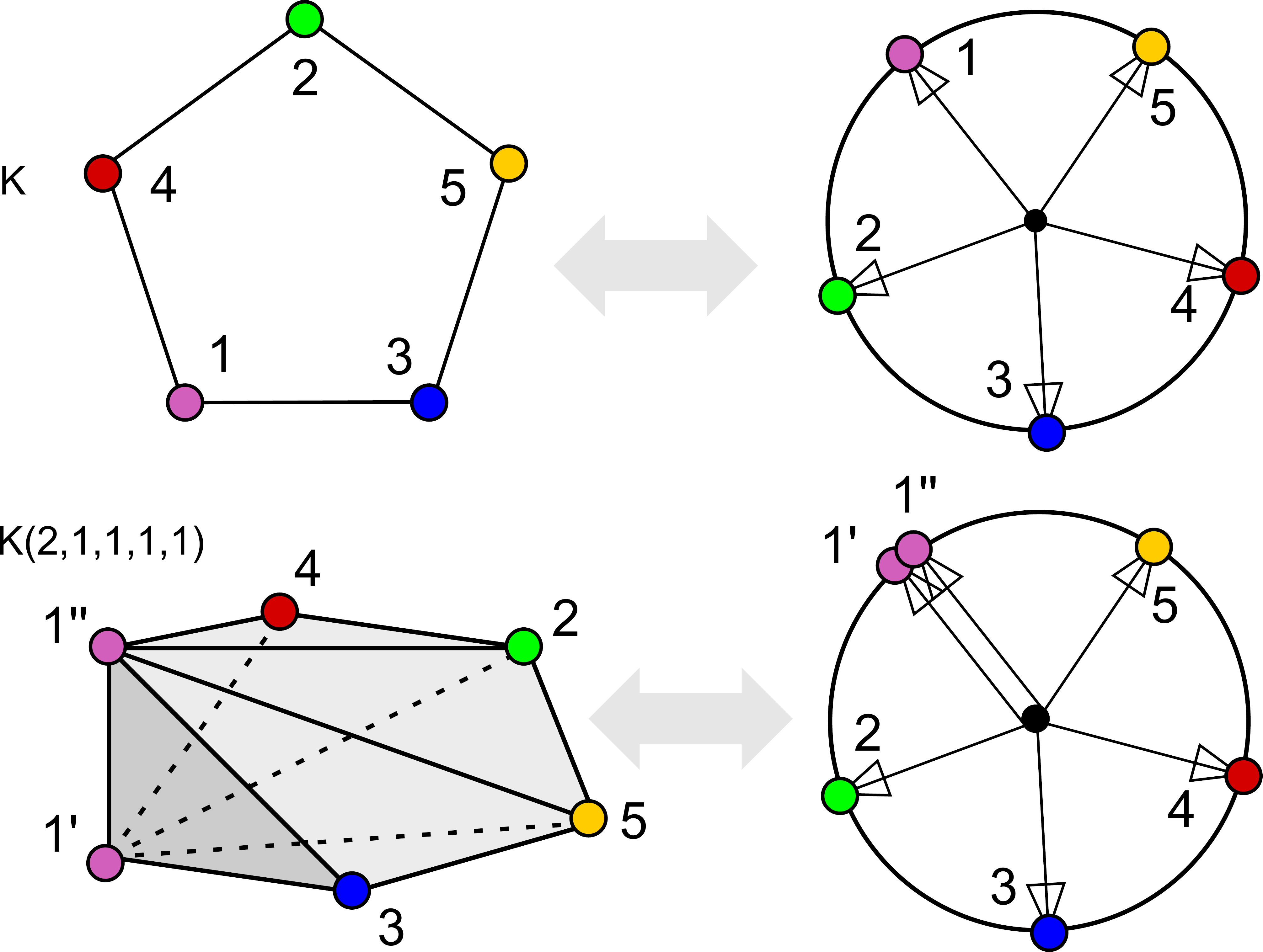}
\end{center}
\caption{Simplicial wedge construction and its effect on Gale
diagram.}\label{pictPentDoubling}
\end{figure}

\ex Fig.\ref{pictPentDoubling} shows how Gale diagram changes when
a simplicial wedge construction is applied to the boundary of a
simplicial polytope. When vertex $1$ is ``wedged'' the
corresponding vertex in Gale diagram doubles.

\section{Bigraded Betti numbers}\label{SecBetti}

Let $\ko$ be a ground field and $\ko[m] = \ko[v_1,\ldots,v_m]$
--- the ring of polynomials in $m$ variables. The ring $k[m]$
is graded by $\deg v_i = 2$. The field $\ko$ is given the
$\ko[m]$-module structure by the epimorphism $\ko[m]\to \ko$,
$v_i\mapsto 0$.

Let $K$ be a simplicial complex on $m$ vertices. The
\emph{Stanley--Reisner algebra} $\ko[K]$ is defined as a quotient
algebra $\ko[m]/I_{SR}$, where the Stanley--Reisner ideal $I_{SR}$
is generated by square-free monomials $v_{\alpha_1}\ldots
v_{\alpha_k}$ corresponding to nonsimplices
$\{\alpha_1,\ldots,\alpha_m\}\notin K$. An algebra $\ko[K]$ is
graded and it has a natural $\ko[m]$-module structure.

Let $\ldots \to R^{-i}\to R^{-i+1}\to\ldots\to R^{-1}\to R^{0}\to
\ko[K]$ be a free resolution of the module $\ko[K]$ by graded
$\ko[m]$-modules $R^{-i}$. We have $R^{-i} = \bigoplus_{j\in
\Zo_{\geqslant}} R^{-i,j}$. The Tor-module of a complex $K$
therefore has a natural double grading:
$$
\Tor_{\ko[m]}(\ko[K],\ko) = \bigoplus\limits_{i,j\in
\Zo_{\geqslant}} \Tor^{-i,2j}_{\ko[m]}(\ko[K],\ko).
$$
The bigraded Betti numbers of a complex $K$ are defined as the
dimensions of the graded components of the Tor-module:
$$
\beta_{\ko}^{-i,2j}(K) = \dim_{\ko}
\Tor^{-i,2j}_{\ko[m]}(\ko[K],\ko).
$$
These numbers depend on a field $\ko$ but we will omit $\ko$ to
simplify notation. The number
$\beta^{-i}(K)=\sum_j\beta^{-i,2j}(K)$ is the rank of a module
$R^{-i}$ in a minimal resolution. Note that $\beta^{0,0}(K)=1$ and
$\beta^{0,2j}(K)=0$ for $j\neq 0$.

Numbers $\beta^{-i,2j}$ represent a lot of combinatorial,
topological and algebraical information about simplicial complex
(see e.g. \cite{BPnew}).

By Hochster formula \cite{Hoch}, \cite[Th.3.2.8]{BPnew}, bigraded
Betti numbers can be expressed in terms of ordinary Betti numbers
of full subcomplexes in $K$:
\begin{equation}\label{equatHochster}
\beta^{-i,2j}(K) = \sum\limits_{J\subseteq [m], |J|=j}\dim
\Hr^{j-i-1}(K_J;\ko),
\end{equation}

\subsection{Bigraded Betti numbers of constellation complexes}

The result of \S\ref{SecConstel} can be stated in terms of Betti
numbers.

\begin{prop}\label{propResolution}
Let $X\subset \Ss^r$ be as in statement
\ref{propConstelProperties}. Then $\beta^{-i,2j}(\Delta(X))=0$ for
$i>0$ and $j\neq r+i+1$. In other words, each module $R^{-i}$ of
the minimal resolution for $\ko[\Delta(X)]$ is generated in degree
$r+i+1$ for $i>0$.
\end{prop}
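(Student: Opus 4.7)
The plan is to apply Hochster's formula \eqref{equatHochster} directly to $K = \Delta(X)$, reducing the computation of $\beta^{-i,2j}(\Delta(X))$ to the vanishing of reduced cohomology of full subcomplexes in particular degrees.

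First I would invoke Proposition \ref{propConstelProperties}(1): for every $J \subseteq [m]$, the full subcomplex $\Delta(X)_J$ is either a simplex (hence contractible, with all reduced cohomology trivial) or homotopy equivalent to $S^r$. Consequently, $\Hr^k(\Delta(X)_J;\ko)$ vanishes unless $k = r$. This is the one geometric input, and it is exactly the content we borrow from the previous section.

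Next I would plug this into Hochster's formula. For $i > 0$ and any $j$, each summand $\dim \Hr^{j-i-1}(\Delta(X)_J;\ko)$ with $|J| = j$ is zero unless $j - i - 1 = r$, i.e.\ unless $j = r + i + 1$. Therefore the entire sum vanishes when $j \neq r + i + 1$, giving $\beta^{-i,2j}(\Delta(X)) = 0$. (The degenerate summand from $J = \varnothing$, where $\Hr^{-1}(\varnothing;\ko) \cong \ko$, only contributes when $i = j = 0$, which is excluded by $i > 0$, so it causes no trouble.)

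Finally I would translate this vanishing back into the statement about the minimal free resolution: since $\beta^{-i,2j}(\Delta(X)) = \dim \Tor^{-i,2j}_{\ko[m]}(\ko[\Delta(X)], \ko)$ is the number of degree-$2j$ generators of $R^{-i}$ in the minimal resolution, the concentration at $j = r+i+1$ says that $R^{-i}$ is generated in degree $2(r+i+1)$ for $i > 0$. There is essentially no obstacle here; the entire argument is a one-line consequence of Proposition \ref{propConstelProperties}(1) once Hochster's formula is in hand, and the only thing to watch is the harmless case $J = \varnothing$.
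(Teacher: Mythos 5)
Your proposal is correct and follows exactly the route the paper indicates: the paper's entire proof is the remark that the statement ``directly follows from statement \ref{propConstelProperties} and formula \eqref{equatHochster},'' and you have simply spelled out that one-line argument, including the harmless $J=\varnothing$ case.
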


This directly follows from statement \ref{propConstelProperties}
and formula \eqref{equatHochster}. Here is another equivalent
statement: the Stanley--Reisner ideal $I_{SR}(K)$ has a linear
resolution as a graded module over $\ko[m]$. It means that all
maps in the minimal resolution (which are $\ko[m]$-linear maps)
are linear in variables $v_i$ (see \cite{EaRe} for the details).

Using claim \ref{claimGaleAlex} we can interpret statement
\ref{propConstelProperties} in terms of Alexander duality. Let $K$
be the boundary of a simplicial $d$-polytope $P$ with $m$
vertices, thus a simplicial $(d-1)$-sphere. Then $\link_KI$ is a
simplicial (and even polytopal) $(d-1-|I|)$-sphere with $m-|I|$
vertices. By \eqref{eqAlexHomol} $\left(\link_KI\right)^{\wedge}$
should be a homological sphere of homological dimension $m-|I| -
(d-|I|-1) - 3 = m-d-2$. On the other hand, by
\eqref{eqLinkScAlex}, $\left(\link_KI\right)^{\wedge}$ is the same
as $\wh{K}_{[m]\setminus I}$. By claim \ref{claimGaleAlex},
$\wh{K}$ is the constellation complex for the nondegenerate
configuration on a sphere $\Ss^{m-d-2}$. These considerations
provide another explanation why full subcomplexes of a
nondegenerate constellation complex should be homology
$(m-d-2)$-spheres.

The proposition \ref{propResolution} can also be explained in
terms of Eagon--Reiner theorem \cite{EaRe}, which states that $K$
is Cohen--Macaulay if and only if Stanley--Reisner ideal of its
dual $\wh{K}$ has a linear resolution. For a simplicial polytope
$P$ the boundary $\partial P$ is a simplicial sphere, thus a
Cohen--Macaulay complex. Therefore $\Delta(G(P)) = (\partial
P)^{\wedge}$ has a linear resolution.

Moreover, there is a natural correspondence between simplices of
$K = \partial P$ and full subcomplexes of $\wh{K}$ homotopy
equivalent to $S^{m-d-2}$. This correspondence sends $I\in K$ to
the full subcomplex $\wh{K}_{[m]\setminus I}\simeq S^{m-d-2}$.
Thus, full subcomplexes, which are homotopy equivalent to a sphere
represent the face lattice of $K$.

\begin{prop}\label{propBettiConstel}
Let $P$ be a simplicial $d$-polytope with $m$ vertices, and
$X\subset\Ss^{m-d-2}$ --- its Gale diagram. Then for $i>0$ we have
$\beta^{-i}(\Delta(X))=\beta^{-i,m-d-1+i}(\Delta(X))=f_{d-i}(P)$,
where $f_{d-i}(P)$ is the number of $(d-i)$-dimensional simplices
of $\partial P$.
\end{prop}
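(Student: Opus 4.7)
Set $r = m-d-2$, so the Gale diagram lives on $\Ss^r$. The plan is to combine Proposition~\ref{propResolution}, Hochster's formula, Proposition~\ref{propConstelProperties}, and combinatorial Alexander duality.

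First, by Proposition \ref{propResolution} every bigraded Betti number $\beta^{-i,2j}(\Delta(X))$ with $i>0$ vanishes outside the single degree $j = r+i+1 = m-d-1+i$. Hence $\beta^{-i}(\Delta(X))$ equals the single summand $\beta^{-i,\,2(m-d-1+i)}(\Delta(X))$, which already gives the first equality in the statement. What remains is to identify this number with $f_{d-i}(P)$.

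Next, I would apply Hochster's formula \eqref{equatHochster} with $j = m-d-1+i$; the cohomological index becomes $j-i-1 = m-d-2 = r$, so
$$
\beta^{-i,\,2(m-d-1+i)}(\Delta(X)) \;=\; \sum_{J\subseteq [m],\,|J|=m-d-1+i}\dim\Hr^{r}(\Delta(X)_J;\ko).
$$
Since $X$ is the Gale diagram of a simplicial polytope, Claim \ref{claimGaleAlex} tells us it is nondegenerate and good, so Proposition \ref{propConstelProperties}(1) applies: each full subcomplex $\Delta(X)_J$ is either a simplex (contributing $0$) or is homotopy equivalent to $S^{r}$ (contributing $1$). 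Thus the sum counts exactly those $J$ of size $m-d-1+i$ for which $\Delta(X)_J \not\cong$ simplex, equivalently $J\notin \Delta(X)$.

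Finally, I translate this count back to $P$ via Alexander duality. Since $\Delta(X) = (\partial P)^{\wedge}$, the definition $\wh{K} = \{I : [m]\setminus I \notin K\}$ gives the bijection
$$
\{J\subseteq [m] : J\notin \Delta(X)\} \;\longleftrightarrow\; \{I\in \partial P\},\qquad J\leftrightarrow [m]\setminus J.
$$
Under this bijection, $|J| = m-d-1+i$ corresponds to $|I| = d+1-i$, i.e.\ to $(d-i)$-dimensional simplices of $\partial P$. The number of such simplices is $f_{d-i}(P)$, completing the proof.

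The only mildly delicate point is bookkeeping of indices (matching $r+i+1$ on the algebraic side with $d-i$ on the combinatorial side and checking that simplex-versus-sphere dichotomy lines up with the Alexander dual characterization of nonsimplices); no substantial new idea is needed beyond assembling the results already in the excerpt.
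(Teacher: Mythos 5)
Your proof is correct, and it takes a genuinely different (though dual) route from the paper's. The paper proves a slightly stronger fact --- the identity $\beta^{-i}(\wh K)=\beta^{-i,2(m-d-1+i)}(\wh K)=f_{d-i}(K)$ for an arbitrary simplicial $(d-1)$-sphere $K$ --- by applying Hochster's formula, converting each full subcomplex $\wh K_J$ into a link $\link_K([m]\setminus J)$ via \eqref{eqLinkScAlex}, and then using the homological Alexander duality \eqref{eqAlexHomol} together with the fact that links in a simplicial sphere are spheres of the complementary dimension. You instead stay entirely on the $\Delta(X)$ side: you invoke Proposition~\ref{propResolution} for the concentration in a single bidegree, then in Hochster's formula replace the cohomology of $\Delta(X)_J$ directly by the simplex-or-$S^{r}$ dichotomy of Proposition~\ref{propConstelProperties}(1), and close the argument with the purely set-theoretic Alexander duality bijection between nonsimplices of $\Delta(X)$ and simplices of $\partial P$. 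The two arguments are Alexander dual to one another --- the sphere structure of links in $\partial P$ corresponds exactly to the sphere structure of full subcomplexes of $\Delta(X)$, a point the paper itself remarks on after Proposition~\ref{propResolution} --- so the content is the same, but the mechanisms differ. The paper's version has the advantage of applying to nonpolytopal simplicial spheres (where no Gale diagram exists); yours is arguably more geometric and, given that Propositions~\ref{propConstelProperties} and~\ref{propResolution} are already in hand, shorter. All index bookkeeping ($j=m-d-1+i$, $j-i-1=r$, $|[m]\setminus J|=d+1-i$, dimension $d-i$) checks out.
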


\begin{proof}
We prove more general statement. Let $K$ be a simplicial
$(d-1)$-sphere. Then
$\beta^{-i}(\wh{K})=\beta^{-i,2(m-d-1+i)}(\wh{K})=f_{d-i}(K)$ ---
the number of $(d-i)$-dimensional simplices of $K$. The
proposition then follows by claim \ref{claimGaleAlex}.

For $i\neq 0$ we have:
\begin{multline}\label{eqBettiLink}
\beta^{-i,2j}(\wh{K})=\sum\limits_{|J|=j}\dim
\Hr^{j-i-1}(\wh{K}_J)=\sum\limits_{|J|=j}\dim
\Hr^{j-i-1}((\link_K([m]\setminus J))^{\wedge})=\\
=
\sum\limits_{|J|=j}\dim\Hr_{m-(m-j)-(j-i-1)-3}(\link_K([m]\setminus
J))=\sum\limits_{|J|=j}\dim \Hr_{i-2}(\link_K([m]\setminus J))=\\
=\sum\limits_{|J|=j}\dim \Hr_{i-2}(S^{d-1-m+j}).
\end{multline}
Thus $\beta^{-i,2j}(\wh{K})=0$ if $i-2\neq d-1-m+j$, that is
$j\neq m-d+i-1$. On the other hand, $\beta^{-i,m-d+i-1}(\wh{K})$
equals the number of simplices in $K$ with $m-j = m-(m-d+i-1) =
d-i+1$ vertices. Each such simplex contributes $1$ in the last sum
of \eqref{eqBettiLink}.
\end{proof}

To extend this result to general polytopes we use some basic facts
from the theory of nerve-complexes developed in \cite{AB}. Let $P$
be a $d$-dimensional polytope with $m$ vertices (possibly not
simplicial). Consider the numbers $f_{n,l}(P)$ --- the number of
$n$-dimensional proper faces of $P$ with $l$ vertices. In
addition, set $f_{-1,0}(P)=1$ --- this corresponds to the ``empty
face'' of a polytope.

Obviously, in the case when $P$ is simplicial we have
$f_{n,n+1}(P)=f_n(P)$ and $f_{n,l}(P)=0$ if $l\neq n+1$.
Generally, numbers $f_{n,l}$ provide much more detailed
information on a polytope, than the ordinary $f$-vector. In the
work \cite{AB} numbers $f_{n,l}$ appear as the coefficients of the
so called 2-dimensional $F$-polynomial of a polytope (it was
defined for the dual polytope, so the definition in that work is
slightly different).

\begin{prop}\label{propBettiConstelGen}
Let $P$ be a $d$-polytope with $m$ vertices, and
$X\subset\Ss^{m-d-2}\sqcup\{0\}$ --- its Gale diagram. Then for
$i>0$ we have $\beta^{-i,2j}(\Delta(X))=f_{d-i,m-j}(P)$.
\end{prop}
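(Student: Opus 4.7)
The plan is to generalize the proof of Proposition \ref{propBettiConstel}: one still rewrites $\beta^{-i,2j}(\Delta(X))$ using Hochster's formula and Alexander duality, but now the homotopy type of $\link_{K(P)}I$ must be analyzed for an arbitrary (not necessarily simplicial) polytope $P$.

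First, Hochster's formula \eqref{equatHochster}, combined with the Alexander duality \eqref{eqAlexHomol} applied to each full subcomplex of $\Delta(X)$ and the link--subcomplex identity \eqref{eqLinkScAlex} (using $\wh{\Delta(X)}=K(P)$), yields
\begin{equation*}
\beta^{-i,2j}(\Delta(X))=\sum_{|J|=j}\dim\Hr^{j-i-1}(\Delta(X)_J)=\sum_{|I|=m-j}\dim\Hr_{i-2}(\link_{K(P)}I),
\end{equation*}
where $I=[m]\setminus J$ and subsets $I\notin K(P)$ contribute nothing (the corresponding full subcomplex is a simplex).

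For $I\in K(P)$, let $F=F(I)$ be the smallest face of $P$ whose vertex set $V(F)$ contains $I$ (allowing $F=\varnothing$ when $I=\varnothing$), and set $I'=V(F)\setminus I$. Because every facet of $P$ containing $I$ automatically contains $F$, one obtains the join decomposition
\begin{equation*}
\link_{K(P)}I=\Delta_{I'}\ast\link_{K(P)}V(F).
\end{equation*}
Hence if $I'\neq\varnothing$ the link is contractible, and only $I=V(F)$ for a face $F$ contributes to the sum.

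Finally, identify $\link_{K(P)}V(F)$: it is the simplicial complex whose maximal simplices are the sets $V(G)\setminus V(F)$ indexed by facets $G\supset F$ of $P$. By the nerve-complex theory of \cite{AB}, applied to the quotient polytope $P/F$ of dimension $d-\dim F-1$, this complex is homotopy equivalent to $S^{d-\dim F-2}$. Consequently $\dim\Hr_{i-2}(\link_{K(P)}V(F))=1$ exactly when $\dim F=d-i$. Summing over all proper faces $F$ with $\dim F=d-i$ and $|V(F)|=m-j$, together with the convention $f_{-1,0}(P)=1$ coming from $F=\varnothing$, reproduces $f_{d-i,m-j}(P)$. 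The main obstacle is this last identification $\link_{K(P)}V(F)\simeq S^{d-\dim F-2}$, which requires the nerve-complex machinery of \cite{AB}; once it is in place, the join decomposition and the counting are routine consequences of the face-lattice combinatorics of $P$.
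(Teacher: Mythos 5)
Your proposal is correct and follows essentially the same route as the paper: reduce via Hochster's formula and Alexander duality to a sum of reduced Betti numbers of links $\link_{K(P)}I$, then invoke the homotopy types of these links (sphere $S^{d-\dim F-2}$ when $I=\ts(F)$ for a face $F$, contractible otherwise) and count. The paper simply cites \cite[Lemma~4.7]{AB} for both facts about the links, whereas you supply a short argument for the contractibility part via the join decomposition $\link_{K(P)}I=\Delta_{I'}\ast\link_{K(P)}V(F)$ and still lean on \cite{AB} for the sphere identification; this is a pleasant elaboration but not a different method.
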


\begin{proof}
The calculation from proposition \ref{propBettiConstel} gives
\begin{equation}
\beta^{-i,2j}(\wh{K})=\sum\limits_{|J|=j}\dim
\Hr_{i-2}(\link_K([m]\setminus J)).
\end{equation}
In the case $K=K(P)$ it gives
\begin{equation}\label{eqBettiLink2}
\beta^{-i,2j}(\Delta(X))=\sum\limits_{|J|=j}\dim
\Hr_{i-2}(\link_{K(P)}([m]\setminus J)).
\end{equation}
For each face $F\subset P$ denote by $\ts(F)\subset K(P)$ the set
of its vertices. In \cite[Lemma 4.7]{AB} we proved that
$\link_{K(P)}\ts(F)\simeq S^{d - \dim F - 2}$ for any face
$F\subset P$ and for all other simplices $I\in K(P)$ the complex
$\link_{K(P)}I$ is contractible. Therefore, each face $F$ with
$\dim F = n$ and $|\ts(F)|=l$ contributes $1$ to the sum in
formula \eqref{eqBettiLink2} iff $l=m-j$ and $i-2=d-2-n$.
Therefore, $\beta^{-i,2j}(\Delta(X)) = f_{d-i,m-j}$.
\end{proof}

\begin{figure}[h]
\begin{center}
\includegraphics[scale=0.35]{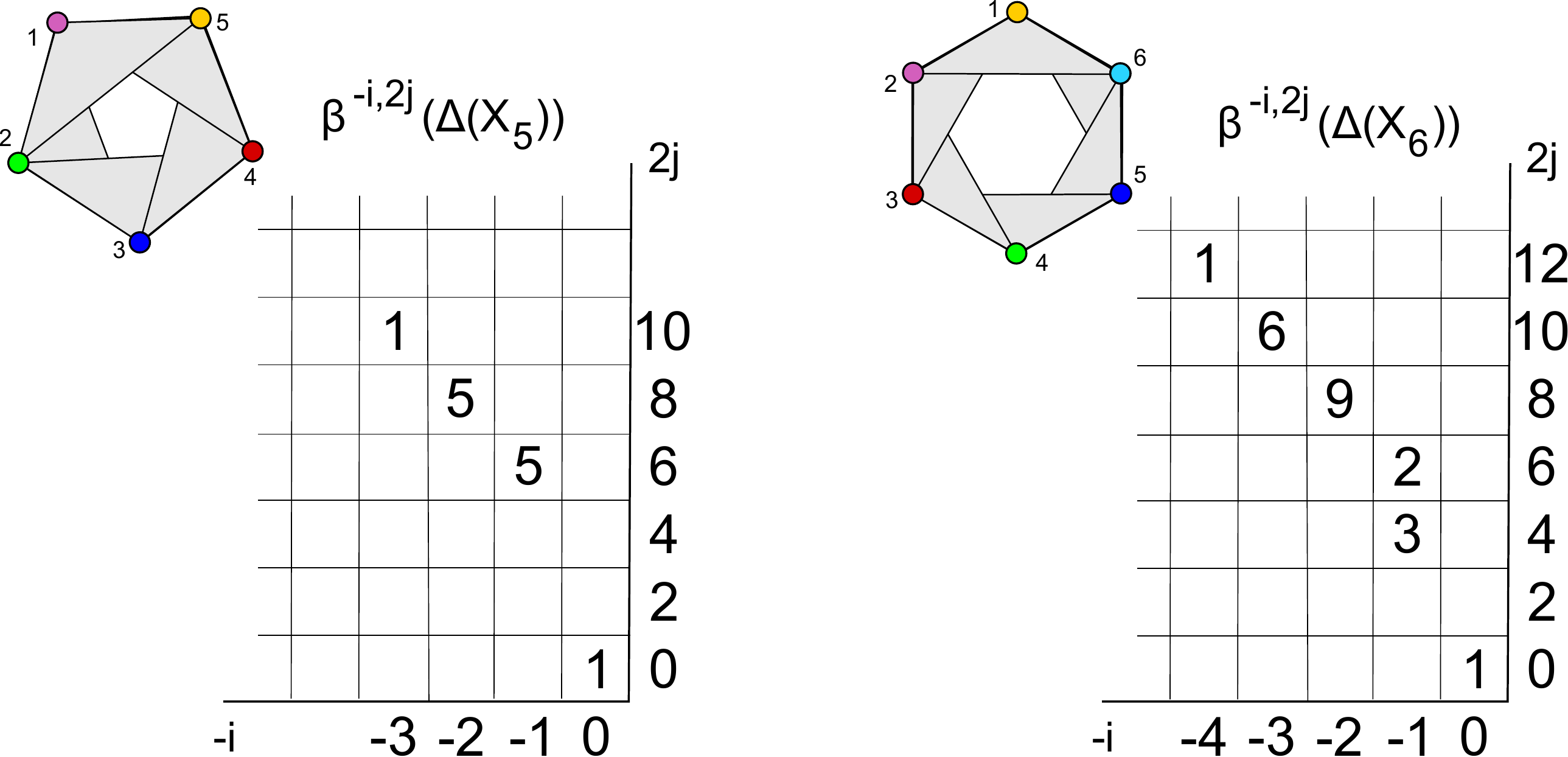}
\end{center}
\caption{Bigraded Betti numbers of complexes $\Delta(X_5)$ and
$\Delta(X_6)$}\label{pictBetti56}
\end{figure}

\ex Bigraded Betti numbers of $\Delta(X_5)$
(fig.\ref{pictCircle5}) and $\Delta(X_6)$ (fig. \ref{pictCircle6})
are depicted on fig. \ref{pictBetti56}. One can see, that bigraded
Betti numbers for $X_5$ are concentrated only in one dimension for
each $i$. This illustrates proposition \ref{propBettiConstel},
since $X_5$ is nondegenerate. The numbers in the left table
represent the $f$-vector of a pentagon, since $X_5$ is its Gale
diagram. The numbers from the right table represent $f_{n,l}$ for
a triangular prism (fig.\ref{pictPrismNerve}), since $X_6$ is its
Gale diagram, as was discussed in example \ref{ex6prism}. Indeed,
a triangular prism has $2$ $2$-dimensional triangular faces, $3$
$2$-dimensional quadratic faces, $9$ edges, $6$ vertices and $1$
empty face.

\subsection{Bigraded Betti numbers of polytopes}

We can use duality between subcomplexes and links in another
direction. Bigraded Betti numbers of $\partial P$ are described by
homology of links in $\Delta(X)$, where $X$ is the Gale diagram of
$P$.

\begin{prop}\label{propBettiOfPolytope}
Let $P$ be a simplicial polytope and $X$ --- its Gale diagram.
Then for $i>0$ there holds
$$\beta^{-i,2j}(K(P)) = \sum\limits_{J\in\Delta(X), |J|=m-j}\dim\Hr_{i-2}(\link_{\Delta(X)}J).$$
\end{prop}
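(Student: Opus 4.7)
The plan is to apply Hochster's formula \eqref{equatHochster} to $K(P)=\partial P$ and then convert reduced cohomology of full subcomplexes of $K(P)$ into reduced homology of links in $\Delta(X)=\widehat{K(P)}$ using combinatorial Alexander duality \eqref{eqAlexHomol} together with the link/subcomplex interchange \eqref{eqLinkScAlex}. A final re-indexing $J\mapsto [m]\setminus J$ puts the sum into the form stated.

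First I would write
$$\beta^{-i,2j}(K(P))=\sum_{|J|=j}\dim\Hr^{j-i-1}(K(P)_J;\ko)$$
from Hochster, and split the sum according to whether $J\in K(P)$ or $J\notin K(P)$. If $J\in K(P)$, then $K(P)_J=\Delta_J$ is a simplex and, because $i>0$, its reduced cohomology in degree $j-i-1$ vanishes (the only surviving term would be the empty complex case $j=0$, $i=0$, which is excluded by hypothesis). So only $J\notin K(P)$ contribute. For each such $J$, regarding $K(P)_J$ as a simplicial complex on the $j$-element vertex set $J$ and applying \eqref{eqAlexHomol} in the form for $j$ vertices yields
$$\Hr^{j-i-1}(K(P)_J;\ko)\cong \Hr_{j-3-(j-i-1)}\bigl((K(P)_J)^{\wedge};\ko\bigr)=\Hr_{i-2}\bigl((K(P)_J)^{\wedge};\ko\bigr).$$

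Next I would invoke \eqref{eqLinkScAlex}, which applies precisely because $J\notin K(P)$, to identify
$$(K(P)_J)^{\wedge}=\link_{\widehat{K(P)}}([m]\setminus J)=\link_{\Delta(X)}([m]\setminus J),$$
the last equality being claim \ref{claimGaleAlex}(2). Substituting back,
$$\beta^{-i,2j}(K(P))=\sum_{\substack{|J|=j\\ J\notin K(P)}}\dim\Hr_{i-2}\bigl(\link_{\Delta(X)}([m]\setminus J);\ko\bigr).$$
Finally, I re-parametrize by $J':=[m]\setminus J$: the condition $J\notin K(P)$ is equivalent to $J'\in\widehat{K(P)}=\Delta(X)$, and $|J'|=m-j$, giving the desired expression.

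The only delicate point, and the step I would check most carefully, is the index arithmetic for Alexander duality applied to the subcomplex $K(P)_J$ on its own vertex set of size $j$ rather than on $[m]$: it is easy to slip and use the exponent $m-3-(j-i-1)$ instead of $j-3-(j-i-1)=i-2$. Apart from this bookkeeping and the trivial verification that the contractible-subcomplex summands vanish for $i>0$, the argument is a direct chain of the three tools \eqref{equatHochster}, \eqref{eqAlexHomol}, and \eqref{eqLinkScAlex}.
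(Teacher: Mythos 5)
Your argument is correct and mirrors exactly the paper's intended proof, which the paper describes as ``similar to proposition~\ref{propBettiConstel}'': the same chain of Hochster's formula~\eqref{equatHochster}, the link/subcomplex duality~\eqref{eqLinkScAlex}, and the Alexander duality isomorphism~\eqref{eqAlexHomol}, followed by the re-indexing $J\mapsto[m]\setminus J$. Your careful handling of the vertex set of size $j$ (rather than $m$) in the Alexander duality step, and the observation that terms with $J\in K(P)$ vanish for $i>0$, are both correct and match the bookkeeping $m-(m-j)-(j-i-1)-3=i-2$ that appears in the paper's proof of proposition~\ref{propBettiConstel}.
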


The proof is similar to proposition \ref{propBettiConstel}. This
calculation can be used to count bigraded Betti numbers for
polytopes with small $m-d$, where $m$ is the number of vertices
and $d$ --- the dimension.

\begin{figure}[h]
\begin{center}
\includegraphics[scale=0.15]{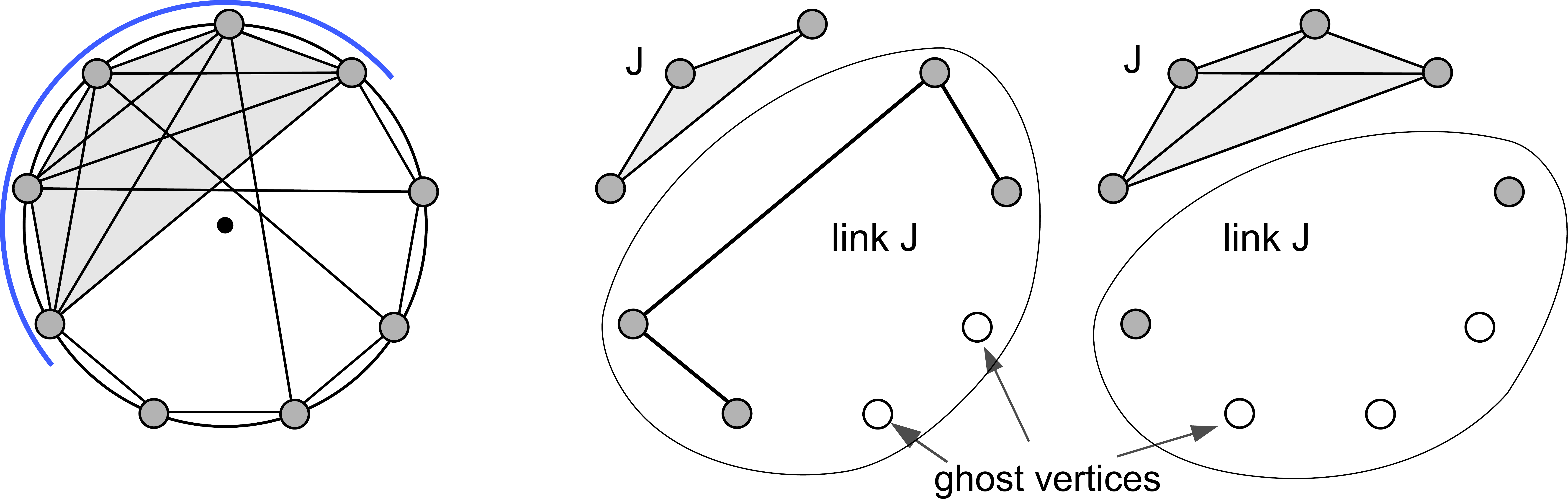}
\end{center}
\caption{Constellation complex for the regular $9$-gon and the
links of its simplices.}\label{pict9gon}
\end{figure}

\ex Let $P$ be a polytope with $m-d=3$. Its Gale diagram is up to
combinatorial equivalence a set $X\subset \Ss^1$ of points placed
in vertices of a regular $(2k+1)$-gon \cite[Sect.6.3]{Gr}. For
simplicity assume that $k\geqslant 2$ and any vertex of $X$
appears with multiplicity $1$, so $m=2k+1$. The case $k=2$ is
shown on fig.\ref{pictCircle5} and $k=4$ on fig.\ref{pict9gon}.
Any $k+1$ consecutive points of the configuration $X\subset \Ss^1$
lie in a common halfcircle, thus form a simplex of $\Delta(X)$.

The links of simplices $I\in\Delta(X)$ have nontrivial reduced
homology only in the following cases:
\begin{enumerate}
\item $I$ is the maximal simplex, that is a set of $k+1$
consecutive points of $X$. $\link_{\Delta(X)}I = \varnothing$ up
to ghost vertices and $\dim \Hr_{-1}(\link_{\Delta(X)}I)=1$.

\item $I$ is the set of $k$ consecutive points of $X$. In this
case $\link_{\Delta(X)}I = S^0$ up to ghost vertices and $\dim
\Hr_{0}(\link_{\Delta(X)}I)=1$.

\item $I=\varnothing$, $\link_{\Delta(X)}I = \Delta(X)$ and
$\dim \Hr_{1}(\link_{\Delta(X)}I)=1$.
\end{enumerate}
One can simply prove that for other choices of $I$ the complex
$\link_{\Delta(X)}I$ is contractible (see fig. \ref{pict9gon}).

By proposition \ref{propBettiOfPolytope} for the corresponding
simplicial polytope $P$ we have the expressions:
$\beta^{-1,2k}(K(P)) = 2k+1$, $\beta^{-2,2(k+1)}(K(P)) = 2k+1$,
$\beta^{-3,2(2k+1)}(K(P))=1$ and $\beta^{-i,2j}(K(P))=0$ for all
other $i,j$ with $i\neq 0$.

The same computation can be made if nontrivial multiplicities are
assigned to vertices of the Gale diagram $X\subset \Ss^1$, but the
answer is more sophisticated. The answer coincides with the result
of \cite{ErArx}, where bigraded Betti numbers of simplicial
spheres with few vertices were calculated using different
approach.

\section{Buchstaber invariant}

\begin{defin}
Let $K$ be a simplicial complex on $m$ vertices and $D^2,S^1$ be
the unit disk and unit circle in $\Co$. For any simplex $I\in K$
define the subset $(D^2,S^1)^I\subset (D^2)^m$, $(D^2,S^1)^I =
\{(x_1,\ldots,x_m)\in (D^2)^m, x_i\in S^1$, if $i\notin I\}$. Then
the \emph{moment-angle complex} of $K$ is a topological space
$$
\Z_K = \bigcup\limits_{I\in K}(D^2,S^1)^I \subseteq (D^2)^m.
$$
\end{defin}

A \emph{real moment-angle complex} $\Zr_K$ is defined in the same
manner by using $D^1$ and $S^0$ as building blocks instead of
$D^2$ and $S^1$ respectively \cite{BP,BPnew}.

There is a canonical coordinatewise action of a torus $T^m =
S^1\times\ldots\times S^1$ on $(D^2)^m$, which can be restricted
to the action on a subset $T^m\curvearrowright\Z_K$. Similarly, we
have an action $\Zt^m{\curvearrowright} {\Zr_K}$. In both cases
the action is not free for $K\neq\varnothing$.

\begin{defin}\label{definBuchNumber}
\emph{Buchstaber invariant} $s(K)$ of the complex $K$ is the
maximal rank of torus subgroups $G\subset T^m$ which act freely on
$\Z_K$. Real Buchstaber invariant $\sr(K)$ is the maximal rank of
subgroups $G\subset \Zt^m$ which act freely on $\Zr_K$.
\end{defin}

We briefly sketch here the main consideration, which allows to
study Buchstaber invariants. The action $T^m\curvearrowright\Z_K$
have stabilizer subgroups of the form $T^I\subseteq T^m$ --- the
coordinate subtori, corresponding to simplices $I\in K$. The
subgroup $G\subseteq T^m$ acts freely on $\Z_K$ iff it intersects
stabilizers trivially.

By dimensional reasons, $\rk G \leqslant m - \max \rk(T^I) =
m-\dim K-1$. Therefore, $s(K)\leqslant m-\dim K - 1$. Similarly
$\sr(K)\leqslant m-\dim K -1$. If $K\neq \Delta_{[m]}$, then
$s(K)\geqslant 1$ and $\sr(K)\geqslant 1$, since diagonal
subgroups act freely. By taking real part of a moment-angle
complex one can also prove that $\sr(K)\geqslant s(K)$. So, we
have:
\begin{equation}\label{eqBuchMainEq}
1\leqslant s(K)\leqslant \sr(K)\leqslant m-\dim K -1.
\end{equation}

\begin{prop}\label{propBuchComplexRealEqual}
If one of the following conditions holds, then $\sr(K)=s(K)$:
\begin{enumerate}
\item $s(K)=m-\dim K-1$ (follows from \eqref{eqBuchMainEq}),

\item $\dim K = 0, 1$ or $2$ \cite{Ayzs,ErNewBig}

\item Either $s(K)$ or $\sr(K)$ is equal to $1$ or $2$.
\cite{ErArx}.
\end{enumerate}
\end{prop}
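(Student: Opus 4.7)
The proposition is a collection of three sufficient conditions, so my plan is to handle them separately rather than search for a unified argument.

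Part (1) is essentially a free consequence of the sandwich \eqref{eqBuchMainEq}. If $s(K)=m-\dim K-1$, then $m-\dim K-1=s(K)\leqslant \sr(K)\leqslant m-\dim K-1$, forcing $\sr(K)=s(K)$. So there is nothing to prove beyond pointing back to \eqref{eqBuchMainEq}.

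For part (2), I would go through the dimensions one at a time. When $\dim K=0$ the complex is a disjoint union of (non-ghost) points, and both $s(K)$ and $\sr(K)$ are computed directly from the combinatorics of coordinate subgroups avoiding the single-vertex stabilizers; they evidently coincide. When $\dim K=1$ the complex is a graph, and both invariants admit a chromatic-number-type description (as in \cite{Ayzs}); the key point is that the combinatorial obstruction — the existence of a linear map $\ko^{m}\to \ko^{s}$ whose kernel meets each coordinate line trivially — is the same whether $\ko=\Zt$ or the characteristic-zero torus setting, once one reduces to a finite combinatorial check. The case $\dim K=2$ is Erokhovets's theorem from \cite{ErNewBig}, which I would simply cite; the mechanism there is again a matroidal reduction.

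Part (3) I would split by which invariant equals which small value. The easy direction is: if $\sr(K)\in\{1,2\}$, then $s(K)\leqslant \sr(K)$ combined with $s(K)\geqslant 1$ restricts $s(K)$ to $\{1\}$ or $\{1,2\}$. To upgrade this to an equality I would invoke the combinatorial characterizations proved by Erokhovets in \cite{ErArx}: namely that $s(K)=1$ and $\sr(K)=1$ are each equivalent to $K$ being a simplex join with a single ghost vertex (i.e.\ a ``pyramid'' at the complex level), and similarly $s(K)=2$ and $\sr(K)=2$ are each equivalent to an explicit combinatorial condition independent of $\ko$. Once the two characterizations are shown to be identical, the two invariants must agree at these small values.

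I expect the main obstacle to be part (3), or more precisely the verification that ``$s(K)=1\Rightarrow\sr(K)=1$'' and the analogous statement at level $2$. The reverse implications are essentially free from \eqref{eqBuchMainEq}, but the forward implications require showing that a torus-level obstruction to a free action lifts to a $\Zt$-level obstruction, which is exactly where the combinatorial/matroidal characterizations from \cite{ErArx} do the work. Since all the heavy lifting has been carried out in the cited references, my ``proof'' is really a bookkeeping argument that matches each hypothesis to the appropriate external theorem; the only piece I would write out in full is the one-line deduction for (1).
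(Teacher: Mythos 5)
The paper offers no proof of this proposition beyond the parenthetical citations, so there is no internal argument to compare against; unpacking the citations is the appropriate thing to do, and your parts (1) and (2) are fine. For part (3) there is a concrete factual error. You claim that $s(K)=1$ (and $\sr(K)=1$) is equivalent to $K$ being a simplex join with a single ghost vertex, i.e.\ a cone-like complex. This is false: take $K=\partial\Delta^2$ on three vertices. Then $\dim K=m-2=1$, so \eqref{eqBuchMainEq} forces $1\leqslant s(K)\leqslant\sr(K)\leqslant m-\dim K-1=1$, hence $s(K)=\sr(K)=1$, yet $\partial\Delta^2$ is not a cone and has no ghost vertex. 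The correct elementary characterization, which one can extract from the Erokhovets criterion quoted later in the paper (three minimal nonsimplices, or two disjoint ones, forces $s(K)\geqslant 2$), is that $s(K)=1$ iff $\dim K=m-2$, equivalently $K$ has at most two minimal nonsimplices which intersect if there are two; in that case the sandwich already pins down $\sr(K)=1$ as well.

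A secondary slip is in the bookkeeping of which implications are ``free.'' You present ``$\sr(K)\in\{1,2\}\Rightarrow s(K)\in\{1,2\}$'' as the easy half, but what is actually needed is $\sr(K)=2\Rightarrow s(K)=2$, which requires ruling out $s(K)=1$; that is not a consequence of \eqref{eqBuchMainEq} alone, but only becomes free once the forward implication $s(K)=1\Rightarrow\sr(K)=1$ has been established. The lone implication that is genuinely automatic from the sandwich is $\sr(K)=1\Rightarrow s(K)=1$. Your overall strategy --- match the combinatorial characterizations of $s$ and $\sr$ at small values from \cite{ErArx} --- is the right mechanism; the characterizations just need to be stated correctly and the dependency between the four implications tracked more carefully.
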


\subsection{Case of pyramids}

Recall from \S\ref{SecSimpConstr}, that $K_P = K(P^*)$. Polytope
$P$ is simple whenever $P^*$ is simplicial.

\begin{defin}\label{definBuchPoly}
For a (possibly nonsimple) polytope $P$ define $s(P) = s(K_P)$ and
$\sr(P) = \sr(K_P)$.
\end{defin}

Suppose, $P = \pyr Q$ is a pyramid with $(d-1)$-dimensional
polytope $Q$ in the base. In this case $P^* = \pyr Q^*$. So $P$ is
a pyramid whenever $P^*$ is a pyramid.

\begin{thm}\label{thmPyramidS}
Let $P$ be a polytope. Then $s(P)=1$ if and only if $P$ is a
pyramid.
\end{thm}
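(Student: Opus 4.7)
The plan is first to reduce to the real invariant $\sr$, then to dispose of the ``pyramid $\Rightarrow s=1$'' direction by a dimension count, and finally to derive the converse from the recent result of Erokhovets \cite{ErNew} translated through Gale duality.

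By Proposition \ref{propBuchComplexRealEqual}(3) together with the inequality $s(K_P)\leqslant \sr(K_P)$, the equation $s(K_P)=1$ is equivalent to $\sr(K_P)=1$, so it suffices to prove $\sr(K_P)=1$ iff $P$ is a pyramid. Assume first that $P$ is a pyramid. Then $P^{*}$ is a pyramid as well, so Proposition \ref{propPyrZero}(2) yields $x_i=0$ for some $i$ in the Gale diagram $G(P^{*})$. By Claim \ref{claimGaleAlex} one has $K_P^{\wedge}=\Delta(G(P^{*}))$, and $x_i=0$ is exactly the condition that $i$ is a ghost vertex of $K_P^{\wedge}$, i.e.\ $[m]\setminus\{i\}\in K_P$. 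Hence $\dim K_P = m-2$, and the dimension bound in \eqref{eqBuchMainEq} forces $\sr(K_P)\leqslant 1$, so $\sr(K_P)=1$.

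For the converse, suppose $P$ is not a pyramid. By the same chain of equivalences $G(P^{*})$ contains no zero vector, equivalently $K_P$ contains no simplex of the form $[m]\setminus\{i\}$. I would then invoke the recent theorem from \cite{ErNew}: under precisely this combinatorial hypothesis (no almost-full simplex $[m]\setminus\{i\}$ in $K_P$) one can produce a rank-$2$ subgroup of $\Zt^m$ acting freely on $\Zr_{K_P}$, equivalently a $3$-colouring of $[m]$ such that the complement of every maximal simplex of $K_P$ contains at least two distinct colours. Consequently $\sr(K_P)\geqslant 2$, contradicting the assumption $\sr(K_P)=1$.

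The only serious step is the converse, and it lies entirely outside the dictionary built above: it amounts to constructing the required freely acting rank-$2$ subgroup whenever the polytope is not a pyramid, which is the content supplied by \cite{ErNew}. Once that input is available, the passage between zero vectors in $G(P^{*})$, ghost vertices in $K_P^{\wedge}$, and missing simplices $[m]\setminus\{i\}$ in $K_P$ follows immediately from Claim \ref{claimGaleAlex} and Proposition \ref{propPyrZero}(2).
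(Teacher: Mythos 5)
Your ``if'' direction is correct and is essentially the paper's, phrased via ghost vertices instead of via the observation that the $m-1$ non-apex vertices lie in the base facet; both give $\dim K_P = m-2$ and then \eqref{eqBuchMainEq} finishes. The reduction to $\sr$ is harmless but unnecessary: the result from \cite{ErNew} as cited in the paper concludes $s(K)\geqslant 2$ directly, so one never needs Proposition~\ref{propBuchComplexRealEqual}(3).

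The ``only if'' direction, however, has a genuine gap. You invoke \cite{ErNew} as saying that ``no almost-full simplex $[m]\setminus\{i\}$ in $K_P$'' by itself produces a rank-$2$ freely acting subgroup, but that is not the hypothesis of the theorem. The theorem used in the paper states: if $K$ has three minimal nonsimplices, or two minimal nonsimplices which are disjoint, then $s(K)\geqslant 2$. The whole content of the ``only if'' step is to \emph{produce} such a pair of disjoint minimal nonsimplices, and your proposal never does this. The paper does it geometrically from the Gale diagram $X = G(P^*)\subset \Ss^{m-d-2}$: since $P^*$ is not a pyramid, $0\notin X$; pick a vector $n$ with $\langle n, x_i\rangle \neq 0$ for all $i$, let $I_{\pm}$ be the indices of points in $H(\pm n)$; then $I_+\sqcup I_- = [m]$, each $I_{\pm}$ is a simplex of $\Delta(X) = K(P^*)^{\wedge}$, hence each complement $[m]\setminus I_{\pm} = I_{\mp}$ is a nonsimplex of $K(P^*)$, and these two nonsimplices are disjoint, hence so are minimal nonsimplices chosen inside them. (One can also close the gap purely combinatorially: if $K\neq\Delta_{[m]}$ and $K$ has at most two minimal nonsimplices which moreover intersect, pick $i$ in the intersection; then $[m]\setminus\{i\}$ contains no minimal nonsimplex, so $[m]\setminus\{i\}\in K$ --- contradicting your hypothesis.) Either argument needs to be supplied; without it the appeal to \cite{ErNew} does not apply.
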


\begin{proof}
Let $\dim P=d$ and $P$ has $m$ facets.

The ``if'' part is simple. If $P$ is a pyramid with $m$ facets,
then $P^*$ is a pyramid with $m$ vertices. All of these vertices
except apex lie in a same facet --- in the base. These vertices by
definition form a simplex $I\in K(P^*)=K_P$, such that $|I|=m-1$.
Therefore, $\dim K(P^*)=m-2$ and the desired relation $s(P)=1$
follows from \eqref{eqBuchMainEq}.

Now the ``only if'' part. Suppose that $P$ is not a pyramid. Then
$P^*$ is not a pyramid and its Gale diagram $G(P^*)$ does not
contain points at $\{0\}$ by proposition \ref{propPyrZero}. By
claim \ref{claimGaleAlex} the complex $K(P^*)^{\wedge}$ is a
constellation complex for the configuration $G(P^*)\subset
\Ss^{m-d-2}$. It does not have ghost vertices.

Let $n\in\Ss^{m-d-2}$ be such a vector, that $\langle n,
x_i\rangle\neq 0$ for each point $x_i\in G(P^*)$. Then every point
from $G(P^*)$ lies either in $H(n)$ or $H(-n)$ (the open
hemispheres introduced in \S\ref{SecSimpConstr}). Let $I_{+}$ and
$I_{-}$ be the sets of points of Gale diagram sitting in $H(n)$
and $H(-n)$ respectively. These sets are by definition the
simplices of constellation complex $I_{+}, I_{-}\in
\Delta(G(P^*))$, and $I_{+}\sqcup I_{-} = [m]$.

Since $\Delta(G(P^*)) = K(P^*)^{\wedge}$, we have
$I_{-}=[m]\setminus I_{+}\notin K(P^*)$ and $I_{+}=[m]\setminus
I_{-}\notin K(P^*)$ by definition of Alexander dual complex. Also
$I_{+}\cap I_{-}=\varnothing$. $I_{+}$ and $I_{-}$ are
nonsimplices of $K(P^*)$ so they contain minimal nonsimplices
$J_1\subseteq I_{+}$ and $J_2\subseteq I_{-}$, $J_1,J_2\in
N(K(P^*))$, which, obviously, satisfy $J_1\cap J_2 = \varnothing$.

Now we use recent result of N.Yu.Erokhovets \cite{ErNew} which is
the following. Let $K$ be a simplicial complex. If $K$ has three
minimal nonsimplices or two minimal nonsimplices, which do not
intersect, then $s(K)\geqslant 2$. Applying this result to
$K(P^*)$ finishes the proof.
\end{proof}

\rem The similar theorem holds for real Buchstaber number by
statement \ref{propBuchComplexRealEqual}.

\rem If we restrict to the class of simple polytopes, the theorem
\ref{thmPyramidS} is known \cite{ErArx}: for simple polytope $P$
the condition $s(P)=1$ is equivalent to $P = \triangle^{m-1}$.
Obviously, a simplex is the only simple polytope, which is a
pyramid.

\subsection{General polytopes}

In this section we apply the result of \cite{ErNew} and Gale
duality to general polytopes. This leads to interesting
combinatorial consequences.

A set of vectors $\{a_1,\ldots,a_l\}\subset \Zt^k$ is called a
minimal linear dependence if $a_1+\ldots+a_l=0$, but its proper
subsets are linearly independent.

\begin{prop}[{\cite[Prop.9]{ErNew}}]\label{propKolyaNonsimpl}
$\sr(K)\geqslant k$ if and only if there exist a mapping\linebreak
$\xi\colon\Zt^k\setminus \{0\}\to N(K)$ such that
$\xi(a_1)\cap\ldots\cap\xi(a_{2r+1})=\varnothing$ for any minimal
linear dependence $\{a_1,\ldots,a_{2r+1}\}$ (with an odd number of
elements).
\end{prop}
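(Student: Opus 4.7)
The plan is to translate the free-action condition for a rank-$k$ subgroup $G \subseteq \Zt^m$ acting on $\Zr_K$ into a condition on the $\Zt$-supports of its elements, and then to match that condition with the existence of $\xi$ via an elementary linear-algebra lemma over $\Zt$.

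For the ``only if'' direction, let $G \subseteq \Zt^m$ act freely on $\Zr_K$, have rank $k$, and fix a linear isomorphism $\phi \colon \Zt^k \to G$. The stabilizers of $\Zt^m \curvearrowright \Zr_K$ are the coordinate subgroups $\Zt^J$ with $J \in K$, so freeness is equivalent to $S(a) \notin K$ for each nonzero $a \in \Zt^k$, where $S(a) \subseteq [m]$ is the support of $\phi(a)$. Each $S(a)$ then contains some minimal nonsimplex, which we pick as $\xi(a)$. Linearity of $\phi$ gives $S(a_1) \triangle \cdots \triangle S(a_l) = S(a_1+\cdots+a_l)$, so for a dependence $a_1+\cdots+a_{2r+1}=0$ each $i \in [m]$ lies in an even number of the sets $S(a_j)$, hence in at most $2r$ of them. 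A fortiori $i$ lies in at most $2r$ of the $\xi(a_j) \subseteq S(a_j)$, so the claimed intersection is empty.

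For the ``if'' direction, given $\xi$, for each $i \in [m]$ set $P_i = \{a \in \Zt^k\setminus\{0\} : i \in \xi(a)\}$. The hypothesis reads: $P_i$ contains no odd minimal linear dependence. Since any dependence in $\Zt^k$ can be split off one minimal dependence at a time, every dependence in $P_i$ is a disjoint union of minimal dependences lying in $P_i$, so $P_i$ in fact contains no odd dependence of any length. This is precisely the obstruction to realising the constant function $1$ on $P_i$ as the restriction of a $\Zt$-linear functional on $\Zt^k$: the consistency condition $|Q| \equiv 0 \pmod 2$ for every $Q \subseteq P_i$ with $\sum Q = 0$ is met, so the map $a \mapsto 1$ extends to a linear functional $A_i \colon \Zt^k \to \Zt$. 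Assembling these into $\phi(a) = (A_1(a),\ldots,A_m(a))$ produces a linear map $\phi \colon \Zt^k \to \Zt^m$. By construction the support of $\phi(a)$ contains the minimal nonsimplex $\xi(a)$, so $\phi(a) \notin \Zt^J$ for any $J \in K$; and $\xi(a) \neq \varnothing$ forces $\phi$ to be injective. Thus $G = \phi(\Zt^k)$ has rank $k$ and acts freely on $\Zr_K$, which yields $\sr(K) \geqslant k$.

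The one conceptual ingredient is the elementary lemma over $\Zt$: a subset $P \subseteq \Zt^k\setminus\{0\}$ admits a linear functional identically equal to $1$ on $P$ if and only if every linear dependence in $P$ has even length, equivalently, $P$ contains no odd minimal linear dependence. The condition on $\xi$ in the proposition is exactly the translation of this obstruction into the fibers $P_i$ over $[m]$; everything else is bookkeeping between $\phi$, its supports, and the minimal nonsimplices of $K$.
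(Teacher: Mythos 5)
Your proof is correct. Note first that the paper states this proposition as a citation to \cite{ErNew} and does not supply its own proof, so there is no in-paper argument to compare against; I can only assess your argument on its merits, and on those merits it holds up.

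The reduction is right on both sides. Free action of $G=\phi(\Zt^k)\subseteq \Zt^m$ on $\Zr_K$ is equivalent to $S(a):=\mathrm{supp}\,\phi(a)\notin K$ for all $a\neq 0$, since the stabilizers of the action are exactly the coordinate subgroups $\Zt^I$ with $I\in K$ and $\phi(a)\in\Zt^I$ iff $S(a)\subseteq I$. In the forward direction you correctly use $S(a_1)\triangle\cdots\triangle S(a_{2r+1})=S(0)=\varnothing$ to conclude each index appears an even, hence proper, number of times; picking $\xi(a)\subseteq S(a)$ then gives the empty-intersection condition (and in fact for all odd dependences, not just minimal ones, which is harmless). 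In the reverse direction the two nontrivial points are both handled: (i) the decomposition of an arbitrary dependence over $\Zt$ into disjoint minimal dependences is used to upgrade ``no odd \emph{minimal} dependence in $P_i$'' to ``no odd dependence in $P_i$,'' and this is exactly where the word ``minimal'' in the hypothesis earns its keep; (ii) the $\Zt$-linear-algebra lemma (a constant-$1$ function on $P\subseteq\Zt^k\setminus\{0\}$ extends to a linear functional iff every dependence inside $P$ has even length) is the standard consistency criterion for a linear system over a field, applied to $\langle A,p\rangle=1$, $p\in P_i$. Assembling $\phi=(A_1,\ldots,A_m)$ then gives $\xi(a)\subseteq\mathrm{supp}\,\phi(a)$, hence $\mathrm{supp}\,\phi(a)\notin K$, and nonemptiness of minimal nonsimplices gives injectivity, so $\mathrm{rk}\,\phi(\Zt^k)=k$. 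This matches what I would expect the cited Erokhovets proof to look like (the $A_i$ are precisely the rows of the characteristic matrix of the subgroup), so there is nothing substantively different to flag.
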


\rem\label{remMapToNonsimplices} Equivalently, there exist a
mapping $\xi$ from $\Zt^k\setminus \{0\}$ to the set of all
nonsimplices (not necessary minimal), satisfying
$\xi(a_1)\cap\ldots\cap\xi(a_{2r+1})=\varnothing$ for any minimal
linear dependence $\{a_1,\ldots,a_{2r+1}\}$. Indeed, if there is
such a map, we can choose a minimal nonsimplex inside each
$\xi(a)$ and these subsets satisfy the same nonintersecting
condition.

\begin{cor}
For any simplicial complex $K$ on $[m]$ and an array
$(l_1,\ldots,l_m)$ of positive integers we have
$s(K(l_1,\ldots,l_m))=s(K)$.
\end{cor}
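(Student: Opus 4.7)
The plan is to apply Proposition \ref{propKolyaNonsimpl} (with the sharpening of Remark \ref{remMapToNonsimplices}) to both $K$ and $K(l_1,\ldots,l_m)$ and to transport the witnessing maps $\xi$ across the natural multiplicity projection. Let $V'$ be the vertex set of $K(l_1,\ldots,l_m)$ and let $\pi\colon V'\to [m]$ be the map that collapses the $l_i$ copies of vertex $i$ back to $i$.

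First I would establish the combinatorial description underlying the whole argument: the minimal nonsimplices of the iterated simplicial wedge are exactly the full preimages of minimal nonsimplices of $K$, that is, $N(K(l_1,\ldots,l_m))=\{\pi^{-1}(\sigma)\mid \sigma\in N(K)\}$. This is immediate from the combinatorial definition of $K(l_1,\ldots,l_m)$ given in \cite{BBCGit,Ayzit}. In particular, a subset $T\subseteq V'$ is a nonsimplex iff $\pi(T)$ is a nonsimplex of $K$ and $T$ contains every copy of each vertex in $\pi(T)$.

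For the inequality $s(K)\leqslant s(K(l_1,\ldots,l_m))$, suppose $s(K)\geqslant k$ and let $\xi\colon\Zt^k\setminus\{0\}\to N(K)$ be a map satisfying the non-intersection condition. Define $\tilde\xi(a):=\pi^{-1}(\xi(a))$, which lands in $N(K(l_1,\ldots,l_m))$ by the structural description above. Since $\pi^{-1}$ commutes with intersections and each fiber $\pi^{-1}(i)$ is nonempty,
\[
\bigcap_j \tilde\xi(a_j)\;=\;\pi^{-1}\!\Bigl(\bigcap_j \xi(a_j)\Bigr)\;=\;\varnothing\ \Longleftrightarrow\ \bigcap_j \xi(a_j)=\varnothing,
\]
so $\tilde\xi$ witnesses $s(K(l_1,\ldots,l_m))\geqslant k$. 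For the reverse inequality, take a map $\tilde\xi\colon\Zt^k\setminus\{0\}\to N(K(l_1,\ldots,l_m))$ witnessing $s(K(l_1,\ldots,l_m))\geqslant k$; by Step~1 each value has the form $\tilde\xi(a)=\pi^{-1}(\sigma(a))$ for a unique $\sigma(a)\in N(K)$, and setting $\xi(a):=\sigma(a)$ the same identity shows $\xi$ satisfies the non-intersection hypothesis, hence $s(K)\geqslant k$. Combining the two inequalities yields the claimed equality. The argument is identical for $\sr$, invoking Proposition~\ref{propKolyaNonsimpl} directly.

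The main obstacle is really only Step~1: one must pin down the correct description of $N(K(l_1,\ldots,l_m))$ and note that $\pi$ is injective on the family $\{\pi^{-1}(\sigma):\sigma\in N(K)\}$ with $\pi\circ\pi^{-1}=\mathrm{id}$ on subsets of $[m]$. Once this structural fact is in hand, the proof is purely formal: the correspondence $\sigma\leftrightarrow\pi^{-1}(\sigma)$ is a bijection between $N(K)$ and $N(K(l_1,\ldots,l_m))$ that preserves emptiness of intersections, so it transports witnessing maps in both directions.
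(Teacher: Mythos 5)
Your argument is the same as the paper's: the paper's proof is a one-line citation of the description of $N(K(l_1,\ldots,l_m))$ from \cite{BBCGit} together with Proposition~\ref{propKolyaNonsimpl}, and you have simply spelled out the details. The central structural fact $N(K(l_1,\ldots,l_m))=\{\pi^{-1}(\sigma)\mid\sigma\in N(K)\}$ is correct (it is precisely the statement that $I_{SR}(K(l_1,\ldots,l_m))$ is obtained from $I_{SR}(K)$ by the substitution $v_i\mapsto v_{i,1}\cdots v_{i,l_i}$), and the transport of the witnessing map $\xi$ through $\pi^{-1}$, using that $\pi^{-1}$ preserves intersections and that $\pi^{-1}(S)=\varnothing$ iff $S=\varnothing$, is exactly the intended argument.

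Two small remarks. First, the auxiliary sentence ``a subset $T\subseteq V'$ is a nonsimplex iff $\pi(T)$ is a nonsimplex of $K$ and $T$ contains every copy of each vertex in $\pi(T)$'' is not an ``iff'': a non-minimal nonsimplex can contain a full preimage $\pi^{-1}(\sigma)$ together with only a partial fiber over some further vertex. This is harmless, since you only use the correct description of \emph{minimal} nonsimplices. Second, and more substantively, Proposition~\ref{propKolyaNonsimpl} as quoted characterizes $\sr$, not $s$, so the argument you wrote out (and, to be fair, the paper's own terse proof) literally establishes $\sr(K(l_1,\ldots,l_m))=\sr(K)$. For the stated equality $s(K(l_1,\ldots,l_m))=s(K)$ one needs the analogous characterization of $s(K)\geqslant k$ in terms of maps to $N(K)$, which is part of Erokhovets' theory in \cite{ErNew} but is not restated in this paper; once that is invoked, the identical $\pi^{-1}$ transport closes the argument for $s$. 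Your closing line has the two invariants inverted --- the body of your proof is the $\sr$ case, and it is the $s$ case that needs the parallel citation --- but this is an inherited imprecision, not a flaw in your reasoning.
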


\begin{proof}
This follows from the description of minimal nonsimplices of
iterated wedge construction $K(l_1,\ldots,l_m)$ \cite{BBCGit} and
proposition \ref{propKolyaNonsimpl}.
\end{proof}

\begin{prop}\label{propErokhReform}
Let $P$ be a $d$-dimensional polytope with $m$ facets. Then
$\sr(P)\geqslant k$ if and only if there exist a map
$\eta\colon\Zt^k\setminus\{0\}\to \Ss^{m-d-2}$ such that
$G(P^*)\subset H(\eta(a_1))\cup\ldots\cup H(\eta(a_{2r+1}))$ for
any odd minimal linear dependence $\{a_1,\ldots,a_{2r+1}\}$ in
$\Zt^k$.
\end{prop}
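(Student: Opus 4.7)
The plan is to combine Proposition \ref{propKolyaNonsimpl} (together with Remark \ref{remMapToNonsimplices}) with the Gale/Alexander-duality description provided by claim \ref{claimGaleAlex}. Recall that $K_P=K(P^*)$, so $\sr(P)\geqslant k$ is exactly $\sr(K(P^*))\geqslant k$, which by Erokhovets' criterion is equivalent to the existence of a map $\xi\colon\Zt^k\setminus\{0\}\to \{\text{nonsimplices of }K(P^*)\}$ such that $\xi(a_1)\cap\ldots\cap\xi(a_{2r+1})=\varnothing$ for every odd minimal linear dependence.

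The translation I would use is the following identity of subsets of $[m]$. By claim \ref{claimGaleAlex}, $\widehat{K(P^*)}=\Delta(G(P^*))$. Hence $I\subseteq[m]$ is a nonsimplex of $K(P^*)$ if and only if $[m]\setminus I$ is a simplex of $\Delta(G(P^*))$, which by the definition of the constellation complex is equivalent to the existence of a point $\eta\in\Ss^{m-d-2}$ such that $x_j\in H(\eta)$ for every $j\in[m]\setminus I$. This is the bridge between $\xi(a)$ and $\eta(a)$.

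Given $\xi$, for each $a$ I would pick any $\eta(a)\in\Ss^{m-d-2}$ with $X([m]\setminus\xi(a))\subset H(\eta(a))$, which exists by the above observation. Given $\eta$, I would set $\xi(a):=\{i\in[m]\mid x_i\notin H(\eta(a))\}$; the complement $[m]\setminus\xi(a)=\{i\mid x_i\in H(\eta(a))\}$ is then a simplex of $\Delta(G(P^*))$ (with $\eta(a)$ itself a witness), so $\xi(a)$ is a nonsimplex of $K(P^*)$ as required by remark \ref{remMapToNonsimplices}. The intersection condition rewrites cleanly:
\[
\xi(a_1)\cap\ldots\cap\xi(a_{2r+1})=\varnothing \;\Longleftrightarrow\; \bigcup_{t=1}^{2r+1}\bigl([m]\setminus\xi(a_t)\bigr)=[m] \;\Longleftrightarrow\; G(P^*)\subset\bigcup_{t=1}^{2r+1}H(\eta(a_t)),
\]
where for the $\eta$-to-$\xi$ direction the last equivalence holds by construction, and for the $\xi$-to-$\eta$ direction it follows because the inclusion $X([m]\setminus\xi(a_t))\subset H(\eta(a_t))$ means $[m]\setminus\xi(a_t)\subseteq\{i\mid x_i\in H(\eta(a_t))\}$.

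The main thing to be careful about is the possible presence of a ghost vertex $x_i=0$ in $G(P^*)$: such an $i$ lies in $\xi(a)$ for every $a$, so on the $\eta$-side $G(P^*)$ can never be covered by the hemispheres $H(\eta(a_t))$. By proposition \ref{propPyrZero} this happens exactly when $P$ is a pyramid, in which case $\sr(P)=1$ (theorem \ref{thmPyramidS} plus statement \ref{propBuchComplexRealEqual}) and both sides of the equivalence are vacuous for $k\geqslant 2$; for $k=1$ there are no odd minimal linear dependences in $\Zt\setminus\{0\}$ and both sides hold trivially. So the ghost-vertex case is compatible with the statement, and the bijection $\xi\leftrightarrow\eta$ described above handles the remaining cases. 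Everything else is routine bookkeeping with Alexander duality.
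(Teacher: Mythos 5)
Your proof is correct and follows essentially the same route as the paper's: both reduce $\sr(P)\geqslant k$ to Erokhovets' criterion via remark \ref{remMapToNonsimplices}, pass to complements under Alexander duality so that nonsimplices of $K(P^*)$ become simplices of $\Delta(G(P^*))$, and then read each such simplex as a hemisphere-witness to define $\eta$ (and conversely define $\xi(a)$ as the labels outside $H(\eta(a))$). The only substantive addition is your explicit check of the ghost-vertex (pyramid) case, which the paper leaves implicit; that is a sound and worthwhile clarification rather than a different method.
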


\begin{proof}
Recall that $\sr(P) = \sr(K(P^*))$. By remark
\ref{remMapToNonsimplices} $\sr(K(P^*))\geqslant k$ is equivalent
to the existence of a map $\xi$ from $\Zt^k\setminus\{0\}$ to
nonsimplices of $K(P^*)$ satisfying certain conditions. The
complement to any nonsimplex of $K(P^*)$ is a simplex of
$K(P^*)^{\wedge}$, therefore we have a map $\xi'\colon
\Zt^k\setminus\{0\}\to K(P^*)^{\wedge}$,
$\xi'(a)=[m]\setminus\xi(a)$. Map $\xi'$ satisfies the condition
\begin{equation}\label{eqSimplexCover}
\xi'(a_1)\cup\ldots\cup\xi'(a_{2r+1})=[m]
\end{equation}
for any minimal linear dependence $\{a_1,\ldots,a_{2r+1}\}$.

For each binary vector $a\in \Zt^k\setminus\{0\}$ consider a
simplex $\xi'(a)\in K(P^*)^{\wedge}$. By claim \ref{claimGaleAlex}
$K(P^*)^{\wedge}=\Delta(G(P^*))$, so the points of $G(P^*)$ with
labels from $\xi'(a)$ lie in an open hemisphere. So, there exist a
vector $n_a\in \Ss^{m-d-2}$, such that the hemisphere $H(n_a)$
contains $G(P^*)(\xi'(a))$. Set $\eta(a) = n_a$.

If $\{a_1,\ldots,a_{2r+1}\}$ is a minimal linear dependence, then
every label $i\in [m]$ lies in some simplex $\xi'(a_q)$ by
\eqref{eqSimplexCover}. Thus any point of $G(P^*)$ lies in
$H(\eta(a_q))$ for some $q\in[2r+1]$, which was to be proved.

The proof goes the same in opposite direction.
\end{proof}

This gives an idea how to construct polytopes with
$\sr(P)\geqslant k$.

\begin{cor}\label{corConstrBuchGiven}
Consider an arbitrary map $\eta\colon \Zt^k\setminus\{0\}\to
\Ss^{l}$. Let $X=\{x_1,\ldots,x_m\}\subset\Ss^l$ be a spherical
configuration such that $X\subset H(\eta(a_1))\cup\ldots\cup
H(\eta(a_{2r+1}))$ for any odd minimal linear dependence
$\{a_1,\ldots,a_{2r+1}\}$ in $\Zt^k$. Suppose $\bigcup H(x_i)$
covers $\Ss^l$ at least twice, so $X=G(P)$ for some
$(m-l-2)$-dimensional polytope $P$ with $m$ vertices. Then
$\sr(P^*)\geqslant k$.
\end{cor}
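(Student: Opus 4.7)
The plan is to deduce this essentially as a direct translation of Proposition \ref{propErokhReform}, applied not to $P$ but to the polar polytope $P^*$. The corollary is really a packaging of that proposition into a ``constructive'' form, so the proof should amount to checking that the data given in the hypothesis is exactly the data the proposition asks for.

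First, I would verify the dimension bookkeeping. By Gale duality, since $X\subset \Ss^l$ consists of $m$ points wrapping $\Ss^l$ at least twice, the polytope $P$ with $G(P)=X$ has $m$ vertices and $\dim P = m-l-2$; consequently $P^*$ has $m$ facets and $\dim P^* = m-l-2$. Proposition \ref{propErokhReform} applied to $P^*$ therefore characterizes $\sr(P^*)\geqslant k$ by the existence of a map
$$
\widetilde\eta\colon \Zt^k\setminus\{0\}\to \Ss^{m-\dim P^*-2}=\Ss^{l}
$$
such that $G((P^*)^*)\subset H(\widetilde\eta(a_1))\cup\ldots\cup H(\widetilde\eta(a_{2r+1}))$ for every odd minimal linear dependence $\{a_1,\ldots,a_{2r+1}\}$ in $\Zt^k$.

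Second, I would use $(P^*)^*=P$ (ordinary polar duality for convex polytopes), which gives $G((P^*)^*)=G(P)=X$. The required witness $\widetilde\eta$ is then precisely the $\eta$ furnished by the hypothesis of the corollary, and the covering condition is literally the assumed inclusion $X\subset H(\eta(a_1))\cup\ldots\cup H(\eta(a_{2r+1}))$. Hence the sufficient condition of Proposition \ref{propErokhReform} is met, and $\sr(P^*)\geqslant k$.

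There is no genuine obstacle here: the only thing that needs care is the direction of the polar duality (the corollary concludes about $\sr(P^*)$, not $\sr(P)$, precisely because Proposition \ref{propErokhReform} is stated in terms of the Gale diagram of the polar, and the construction starts from a spherical configuration $X$ that we have designated as a Gale diagram of $P$ itself). Once one keeps track of this swap and of the identity $m-\dim P^*-2=l$, the argument is a direct quotation of the proposition.
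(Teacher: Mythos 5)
Your proof is correct and matches the paper's intent exactly: the paper gives no separate argument for this corollary because it is, as you say, a direct unwinding of Proposition \ref{propErokhReform} applied to $P^*$, using $(P^*)^*=P$ and the dimension count $m-\dim P^*-2=l$. Nothing is missing.
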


This construction has an unexpected purely combinatorial
consequence. Let us call the coloring
$\rho\colon\Zt^k\setminus\{0\}\to C$ proper, if any odd minimal
affine dependence $A=\{a_1,\ldots,a_{2r+1}\}$ is not
single-colored, that is $|\rho(A)|\neq 1$.

\begin{thm}\label{thmColorZt}\mbox{}
\begin{enumerate}
\item There is no proper coloring of $\Zt^k\setminus\{0\}$ by $k-1$
colors.
\item There exist a proper coloring of $\Zt^k\setminus\{0\}$ by $k$
colors.
\end{enumerate}
\end{thm}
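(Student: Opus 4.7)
The plan is to translate the combinatorial coloring condition into linear algebra: a color class $C\subseteq \Zt^k\setminus\{0\}$ will contain no odd minimal linear dependence if and only if it lies in an affine hyperplane not through the origin, that is $C\subseteq \phi^{-1}(1)$ for some nonzero linear functional $\phi\colon \Zt^k\to \Zt$. Once this reformulation is established, both halves of the theorem reduce to a one-line dimension count.

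The reformulation lemma I would prove has two directions. The easy one: if $\phi(a)=1$ for every $a\in C$, then any subset $\{a_1,\ldots,a_{2r+1}\}\subseteq C$ of odd cardinality satisfies $\phi(a_1+\cdots+a_{2r+1})=2r+1\equiv 1\pmod 2$, so its sum is nonzero and $C$ has no odd subset summing to zero, let alone an odd minimal linear dependence. For the converse I first upgrade the hypothesis ``no odd minimal linear dependence'' to the stronger ``no odd subset summing to zero'' by a splitting argument: if $U\subseteq C$ is an odd-size subset of minimum size summing to zero and a proper $U'\subsetneq U$ also sums to zero, then in characteristic~$2$ the complement $U\setminus U'$ sums to zero as well, and exactly one of $|U'|$, $|U\setminus U'|$ is odd, contradicting minimality of $|U|$. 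Hence $0$ lies outside the $\Zt$-affine span of $C$ (affine combinations over $\Zt$ being exactly odd-size subset sums), so this span is a coset of a linear subspace avoiding~$0$ and can be enclosed in an affine hyperplane $\phi^{-1}(1)$ for some nonzero~$\phi$.

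With the lemma in hand, a proper coloring of $\Zt^k\setminus\{0\}$ by $n$ colors is exactly a presentation $\Zt^k\setminus\{0\}=\bigcup_{i=1}^n \phi_i^{-1}(1)$ for some nonzero functionals $\phi_i$. For part~(1), assume $n\leqslant k-1$ and form the linear map $\Phi\colon\Zt^k\to \Zt^{k-1}$, $\Phi(a)=(\phi_1(a),\ldots,\phi_{k-1}(a))$; its kernel is nontrivial by dimension, yielding a nonzero $v\in\Zt^k$ with $\phi_i(v)=0$ for every $i$, which lies in no $\phi_i^{-1}(1)$ --- a contradiction. For part~(2), the coordinate functionals $\phi_i(a)=a_i$ cover $\Zt^k\setminus\{0\}$ since every nonzero vector has at least one coordinate equal to $1$; equivalently, the explicit rule $\rho(a)=\min\{i:a_i=1\}$ is a proper $k$-coloring. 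The main obstacle will be the lemma --- and within it, the splitting step passing from ``no odd minimal dependence'' to ``no odd zero-sum subset'' --- after which everything reduces to elementary linear algebra.
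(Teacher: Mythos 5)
Your proof is correct, and it takes a genuinely different and considerably more elementary route than the paper's. The paper treats this theorem as an ``unexpected purely combinatorial consequence'' of the Buchstaber invariant machinery: for part (1) it builds a spherical configuration out of a hypothetical proper $(k-1)$-coloring (antipodes of a regular simplex on $\Ss^{k-3}$, duplicated), reads it as a Gale diagram of a polytope $P$, invokes Corollary \ref{corConstrBuchGiven} to get $\sr(P)\geqslant k$, and contradicts the bound $\sr(P)\leqslant m-\dim P$ from \eqref{eqBuchMainEq}; for part (2) it extracts a coloring from the known value $\sr$ of the cube via Proposition \ref{propErokhReform}. You instead linearize the combinatorial hypothesis directly over $\Zt$: a color class free of odd minimal dependences must lie in an affine hyperplane $\phi^{-1}(1)$, so a proper $n$-coloring is the same thing as $n$ nonzero functionals with $\bigcap_i\Ker\phi_i=\{0\}$, and then the extremal $n$ is $k$ by a one-line rank count, with the coordinate functionals $\phi_i(a)=a_i$ giving the explicit $k$-coloring. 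Your splitting step is exactly right and is the crux: a minimum-cardinality odd zero-sum subset $U$ cannot have a nonempty proper zero-sum subset $U'$, since then $U\setminus U'$ also sums to zero and exactly one of $U'$, $U\setminus U'$ has odd (and smaller) size; so $U$ is automatically a minimal linear dependence, and in characteristic $2$ ``linearly independent'' coincides with ``no nonempty subset sums to zero,'' so no further check is needed. What your argument buys is a short, self-contained, purely algebraic proof; what the paper's argument buys is that the statement drops out of its main theme — the link between Gale diagrams and Buchstaber invariants — rather than being proved in isolation. In fact your argument somewhat deflates the word ``unexpected'': the theorem is an easy fact about $\Zt$-linear algebra once one notices the hyperplane reformulation.
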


\begin{proof}
(1) The statement, obviously, holds for $k=1,2$, so in the
following let $k\geqslant 3$. Suppose the contrary. Let
$\rho\colon \Zt^k\setminus\{0\} \to [k-1]$ be a proper coloring.
Consider a set of points
$\{y_1,\ldots,y_{k-1}\}\subset\Ss^{k-3}\subset \Ro^{k-2}$,
representing the vertices of a regular simplex, inscribed in
$\Ss^{k-3}$. We have $\sum y_i=0$, and $\langle
y_i,y_j\rangle=-\frac{1}{k-2}<0$ for $i\neq j$. Take antipodal
points $x_i=-y_i$ for $i\in[k-1]$. Then

\begin{enumerate}
\item $H(y_i)\subset \Ss^{k-3}$ contains all $x_j$ for $j\neq i$.
\item $0\in \relint\conv\{x_1,\ldots,x_{k-1}\}$.
\end{enumerate}

Consider a map $\eta\colon\Zt^k\setminus\{0\}\to \Ss^{k-3}$,
defined by $\eta(a)=y_{\rho(a)}$. This map associates to binary
vector $a\in \Zt^k\setminus\{0\}$ one of the points $y_i$ on a
sphere according to the coloring. Since $\rho$ is a proper
coloring, for any minimal affine dependence
$A=\{a_1,\ldots,a_{2r+1}\}$ the set $\eta(A)$ contains at least
two points $y_i$ and $y_j$, $i\neq j$. By (1), the union of
hemispheres $H(y_i)\cup H(y_j)$ covers the set $\{x_1,\ldots,
x_{k-1}\}$.

Now take each point $x_1,\ldots, x_{k-1}$ with multiplicity at
least $2$ to get a configuration $X$ of $m$ points on $\Ss^{k-3}$.
Configuration $X$ is good and nondegenerate according to (2)
(recall, that good means that corresponding hemispheres cover
$\Ss^{k-3}$ at least twice). Therefore, by claim
\ref{claimGaleAlex} there exist a simple polytope $P$ of dimension
$m-(k-3)-2=m-k+1$ with $m$ facets, such that $G(P^*)=X$. Then, by
corollary \ref{corConstrBuchGiven} $\sr(P)\geqslant k$. On the
other hand, by estimation \eqref{eqBuchMainEq} we have
$\sr(P)\leqslant m-\dim P = m-(m-k+1)=k-1$. This gives a
contradiction.

(2) Let $\{e_1,\ldots,e_k\}$ be the basis of a space $\Ro^k$.
Consider the crosspolytope\linebreak $P=\conv\{\pm e_1,\ldots,\pm
e_k\}$. It is simplicial and has $m=2k$ vertices, so its Gale
diagram $G(P)$ lies on a sphere $\Ss^{k-2}$. One can show, that
$G(P)=\{x_1,x_1,x_2,x_2,\ldots,x_k,x_k\}$ --- the vertices of a
regular simplex, inscribed in $\Ss^{k-2}$, each taken two times.

The dual polytope $P^*$ is a cube. One can easily derive from
\cite[Th. item 4]{ErArx} (or from the fact that $P^*$ is Delzant)
that $\sr(P^*)=m-\dim P = k$. Then by statement
\ref{propErokhReform} there exist a map from
$\eta\colon\Zt^k\setminus \{0\}\to \Ss^{k-2}$, such that
$H(\eta(a_1))\cup\ldots\cup H(\eta(a_{2r+1}))$ contain
$G(P)\subset\Ss^{r-2}$ for each odd minimal linear dependence
$\{a_1,\ldots,a_{2r+1}\}$. To each $a\in \Zt^k\setminus\{0\}$
assign a color $\rho(a)=j\in [k]$ if $H(\eta(a))$ does not contain
$x_j$. Then to each $a$ at least one color is assigned, since
$x_1,\ldots,x_k$ do not lie in a common hemisphere. If several
colors are assigned to binary vector $a$, choose any of them. We
claim that coloring $\rho\colon \Zt^k\to[k]$ obtained by this
procedure is proper. Suppose the contrary: $\rho(A)=\{j\}$ for
some minimal affine dependence $A=\{a_1,\ldots,a_{2r+1}\}$. In
this case all hemispheres $H(\eta(a_i))$ do not contain $x_j$
which contradicts the construction.
\end{proof}

Recall, that Fano plane is a finite projective geometry
$\mathbb{P}\Zt^3$. Its points are nonzero binary vectors in
$\Zt^3$ and lines are triples $\{a_1,a_2,a_1+a_2\}$, $a_1\neq a_2$
that is exactly minimal linear dependencies.

\begin{cor}
For any coloring of Fano plane by two colors there exist a line of
single color.
\end{cor}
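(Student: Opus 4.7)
The plan is essentially an immediate specialization of Theorem \ref{thmColorZt}(1) to the case $k=3$. First I would unpack the correspondence between the data in the corollary and the data in the theorem: the points of the Fano plane $\mathbb{P}\Zt^3$ are by definition the nonzero elements of $\Zt^3$, i.e.\ the set $\Zt^3\setminus\{0\}$ that appears in the theorem, and its lines are the triples $\{a_1,a_2,a_1+a_2\}$ with $a_1\neq a_2$. Such a triple sums to zero in $\Zt^3$ and every proper subset is linearly independent, so lines are precisely the minimal linear dependencies of cardinality $3$ in $\Zt^3$; in particular they are odd.

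Next I would translate the conclusion. A $2$-coloring of the Fano plane is a map $\rho\colon\Zt^3\setminus\{0\}\to\{1,2\}$. By the definition of a proper coloring given just before Theorem \ref{thmColorZt}, $\rho$ is proper exactly when no odd minimal (linear, equivalently affine) dependence is monochromatic; for $k=3$ every odd minimal linear dependence has size $3$ and is a Fano line, so $\rho$ is proper iff no line is monochromatic.

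Finally I would invoke Theorem \ref{thmColorZt}(1) with $k=3$: it asserts that there is no proper coloring of $\Zt^3\setminus\{0\}$ by $k-1=2$ colors. Consequently any $2$-coloring of the Fano plane fails to be proper, which by the translation above means some line is monochromatic. There is no real obstacle here; the entire content lies in Theorem \ref{thmColorZt}, and the corollary is just the observation that the base case $k=3$ recovers this classical Ramsey-type statement about the Fano plane.
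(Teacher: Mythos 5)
Your proof is correct and is exactly the argument the paper leaves implicit: the corollary follows by specializing Theorem~\ref{thmColorZt}(1) to $k=3$, once one observes that the only odd minimal linear dependencies in $\Zt^3\setminus\{0\}$ have size $3$ and are precisely the Fano lines. Nothing to add.
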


\rem We would like to mention that the technic used to prove
theorem \ref{thmColorZt} looks very similar to the proof of Kneser
conjecture, found by B\'{a}r\'{a}ny \cite{Bar}. Though a direct
connection of these subjects is not clarified yet.
\\
\\
Here is another unexpected fact, derived from the theory of
Buchstaber invariant.

\begin{prop}\label{propFanoCircle}
For any map $\eta\colon \mathbb{P}\Zt^3\to \Ss^1$ there exist a
Fano line $\{a,b,c\}\subset \mathbb{P}\Zt^3$ such that
$0\notin\conv\{\eta(a),\eta(b),\eta(c)\}$.
\end{prop}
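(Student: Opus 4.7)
The plan is to derive this proposition from the Fano-plane two-colouring corollary stated immediately above (which is the $k=3$ case of theorem \ref{thmColorZt}). Given a map $\eta\colon \mathbb{P}\Zt^3 \to \Ss^1$, I would first pick a vector $n \in \Ss^1$ which is not orthogonal to any of the seven points $\eta(a)$, $a \in \mathbb{P}\Zt^3$. Such an $n$ exists because for each $a$ the condition $\langle n, \eta(a) \rangle = 0$ forbids only the two-point set $\{\pm \eta(a)^{\perp}\} \subset \Ss^1$, so the total bad locus is finite.

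With such $n$ in hand, every $\eta(a)$ lies in exactly one of the two disjoint open hemispheres $H(n)$ or $H(-n)$, so I can define a two-colouring $\rho\colon \mathbb{P}\Zt^3 \to \{+,-\}$ by setting $\rho(a)=+$ when $\eta(a)\in H(n)$ and $\rho(a)=-$ when $\eta(a)\in H(-n)$. The Fano corollary then provides a monochromatic line $\{a,b,c\}\subset \mathbb{P}\Zt^3$ for $\rho$, and by construction the three points $\eta(a), \eta(b), \eta(c)$ all lie in the same open hemisphere $H(n)$ or $H(-n)$.

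Finally I would invoke claim \ref{claimCaratheodory} with $r=1$: three points of $\Ss^1$ lying in a common open hemisphere form a simplex of the corresponding constellation complex, which by the claim is equivalent to $0\notin\conv\{\eta(a),\eta(b),\eta(c)\}$. This is exactly the conclusion sought.

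I do not anticipate a genuine obstacle: the only subtle step is the generic choice of $n$, which amounts to avoiding at most $14$ points on $\Ss^1$. The whole argument is essentially a direct translation between the geometric statement ``three vectors share an open hemisphere'' on $\Ss^1$ and the combinatorial statement ``the triple is not a minimal dependence'' on the one side, and the already-proved two-colouring theorem for the Fano plane on the other side.
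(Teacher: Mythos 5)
Your argument is correct, and it is genuinely different from the one in the paper. You deduce the proposition from the Fano two-colouring corollary that immediately precedes it: choose a unit vector $n\in\Ss^1$ not orthogonal to any of the seven points $\eta(a)$ (a generic choice avoiding at most $14$ points of $\Ss^1$), colour each $a\in\mathbb{P}\Zt^3$ by the hemisphere $H(\pm n)$ containing $\eta(a)$, obtain a monochromatic line $\{a,b,c\}$, and conclude that $\eta(a),\eta(b),\eta(c)$ lie in one open hemisphere, so $0\notin\conv\{\eta(a),\eta(b),\eta(c)\}$ (indeed $\langle n,\cdot\rangle$ stays positive on any convex combination, or one can cite claim \ref{claimCaratheodory}). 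The paper's own proof goes by contradiction without invoking the Fano corollary at all: it assumes $0\in\conv\eta(l)$ for every Fano line $l$, deduces that the three hemispheres for any $l$ cover $\Ss^1$ up to a finite set $D_l$, rotates an arbitrary good nondegenerate $X\subset\Ss^1$ off $\bigcup_l D_l$, and invokes corollary \ref{corConstrBuchGiven} to force $\sr(P)\geqslant 3$ for every simple polytope $P$ with $m-\dim P=3$, contradicting Erokhovets' examples with $\sr(P)=2$. Your route is shorter and more elementary once the Fano corollary is in hand; the paper's route is logically independent of that corollary and instead draws on the harder Erokhovets existence result, which is precisely what makes the proposition ``another unexpected fact derived from the theory of Buchstaber invariant'' in the paper's framing.
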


\begin{figure}[h]
\begin{center}
\includegraphics[scale=0.2]{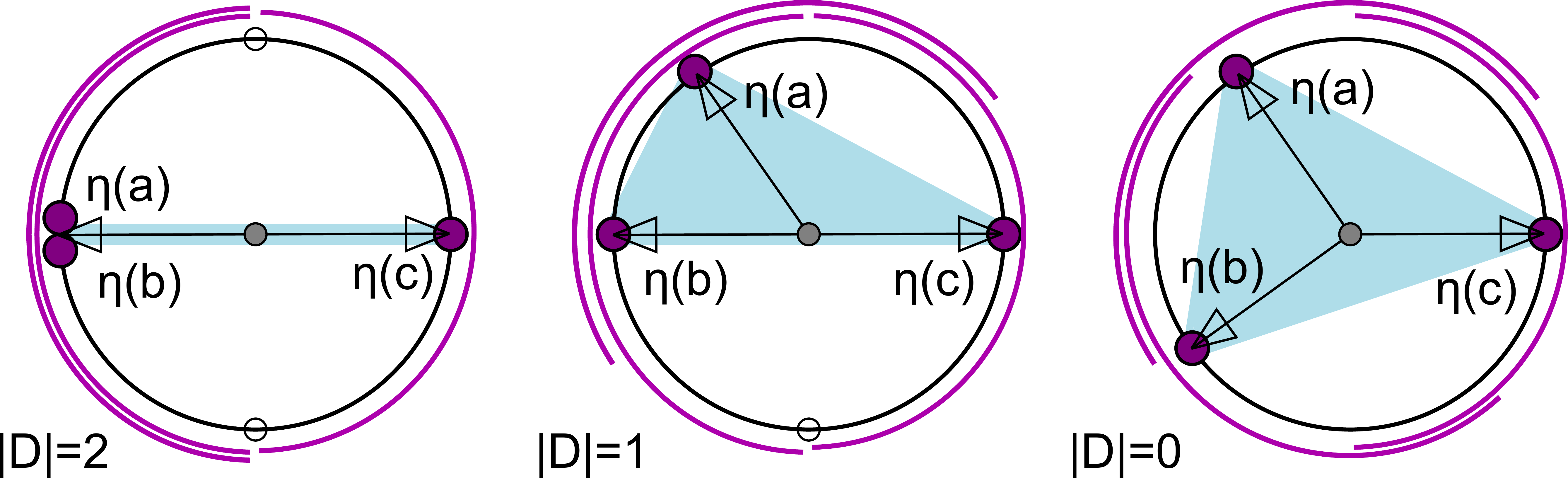}
\end{center}
\caption{Possible cases for $0\in
\conv\{\eta(a),\eta(b),\eta(c)\}$}\label{pictCircle3}
\end{figure}

\begin{proof}
Suppose the contrary. Then, for each Fano line $l=\{a,b,c\}$ holds
either $0\in \conv\{\eta(a),\eta(b)\}$ (or the same for other pair
of points); or $0\in \relint\conv \{\eta(a),\eta(b),\eta(c)\}$. In
both cases $H(\eta(a))\cup H(\eta(b))\cup H(\eta(c)) =
\Ss^1\setminus D_l$, where $D_l$ is a finite set (see fig.
\ref{pictCircle3}), $|D_l|\leqslant 2$. Now, let $X$ be an
arbitrary good nondegenerate configuration of points on a circle
$\Ss^1$. Then $X=G(P^*)$ for a simple polytope $P$. By slightly
rotating $X$ we can assume that $X$ does not intersect the finite
set $\bigcup_lD_l$. Then $X$ is covered by $H(\eta(a))\cup
H(\eta(b))\cup H(\eta(c))$ for any Fano line $\{a,b,c\}$. Then, by
corollary \ref{corConstrBuchGiven} $\sr(P)\geqslant 3$. This
consideration shows that $\sr(P)\geqslant 3$ for any simple
polytope $P$ with $m$ facets of dimension $m-3$. This would
contradict the following result of \cite{Er,ErArx}: for each $k>2$
there exist a simple polytope $P$ with $m$ facets, such that
$m-\dim P=k$ and $\sr(P)=2$.
\end{proof}


\begin{thebibliography}{99}

\bibitem{Ayzs} A. Ayzenberg, \textit{Relation between the Buchstaber invariant and
generalized chromatic numbers}, Far-Eastern Math.
J.,11:2(2011),113-139. (Anton Ayzenberg, \textit{The problem of
Buchstaber number and its combinatorial aspects}, arXiv:1003.0637
[math.CO])

\bibitem{Ayzit} Anton Ayzenberg, \textit{Composition of simplicial complexes,
polytopes and multigraded Betti numbers}, arXiv:1301.4459
[math.CO]

\bibitem{AB} A. A. Ayzenberg, V. M. Buchstaber, \textit{Moment-angle spaces and
nerve-complexes of convex polytopes}, Proceedings of the Steklov
Institute of Mathematics, V.275, 2011.

\bibitem{ABarx} A.A.Ayzenberg, V.M.Buchstaber, \textit{Moment-angle complexes
and polyhedral products for convex polytopes}, arXiv:1010.1922
[math.CO]

\bibitem{BBCGit} A.\,Bahri, M.\,Bendersky, F.\,R.\,Cohen,
S.\,Gitler, \textit{Operations on polyhedral products and a new
topological construction of infinite families of toric manifolds},
arXiv:1011.0094v4 [math.AT]

\bibitem{Bar} J B\'{a}r\'{a}ny, \textit{A short proof of Kneser's
conjecture}, Journal of Combinatorial Theory, Series A, Vol. 25,
Issue 3, 1978, pp.325--326.

\bibitem{BP} V.\,M.\,Buchstaber and T.\,E.\,Panov, Torus Actions and Their
Applications in Topology and Combinatorics // University Lecture,
vol. 24, Amer. Math. Soc., Providence, R.I., 2002.

\bibitem{BPnew} Victor Buchstaber, Taras Panov, \textit{Toric
Topology}, arXiv:1210.2368 [math.AT]

\bibitem{CS} J.H.Conway, N.J.A.Sloane, Sphere packings, lattices
and groups // 3rd ed., A Series of Comprehensive Studies in
Mathematics, Vol. 290, 1999.

\bibitem{EaRe} John A. Eagon and Victor Reiner, \textit{Resolutions of
Stanley--Reisner Rings and Alexander Duality}, J. Pure Appl.
Algebra 130 (1998), N.3, 265--275.

\bibitem{Er} Nikolai Yu Erokhovets, \textit{Buchstaber invariant of simple
polytopes}, Russian Mathematical Surveys(2008),63(5):962

\bibitem{ErArx} Nickolai Erokhovets, \textit{Buchstaber Invariant of Simple
Polytopes}, arXiv:0908.3407 [math.AT]

\bibitem{ErThes} Nickolai Erokhovets, \textit{Maximal torus actions on
moment-angle manifolds}, doctoral thesis, Moscow State University,
Faculty of Mechanics and Mathematics, 2011. (in russian).

\bibitem{ErNew} Nickolai Erokhovets, \textit{Criterion for the Buchstaber
invariant of simplicial complexes to be equal to two},
arXiv:1212.3970 [math.AT]

\bibitem{ErNewBig} Nickolai Erokhovets, \textit{The theory of Buchstaber
invariant of simplicial complexes and convex polytopes}, draft, to
appear.

\bibitem{FM} Yukiko Fukukawa and Mikiya Masuda,
\textit{Buchstaber invariants of skeleta of a simplex}, Osaka J.
Math. V. 48, N.2 (2011), 549--582; arXiv:0908.3448v2.

\bibitem{Gal} Swiatoslaw R. Gal, \textit{Real Root Conjecture
fails for five and higher dimensional spheres}, Discrete and
Computational Geometry, Vol. 34, Number 2, pp. 269-284, 2005,
arXiv:math/0501046 [math.CO]

\bibitem{Gale} D.\,Gale, \textit{Neighboring vertices on a convex
polyhedron}, in Linear inequalities and related systems, edited by
H.W.Kuhn and A.W.Tucker, Princeton, 1956.

\bibitem{Gr} Branko Gr\"{u}nbaum, Convex Polytopes // 2nd ed.,
Graduate Texts in Mathematics Vol. 221, 2003.

\bibitem{Hoch}
M.\,Hochster, \textit{Cohen-Macaulay rings, combinatorics, and
simplicial complexes}, in Ring theory, II (Proc. Second
Conf.,Univ. Oklahoma, Norman, Okla., 1975),  Lecture Notes in Pure
and Appl. Math., V. 26, pp.171--223, Dekker, New York, 1977.

\bibitem{Izm1} I.\,V.\,Izmestiev, \textit{Three-Dimensional
Manifolds Defined by Coloring a Simple Polytope}, Mathematical
Notes March 2001, Volume 69, Issue 3-4, pp. 340--346.

\bibitem{Izm2} I.\,V.\,Izmest'ev, \textit{Free torus action on
the manifold $Z_P$ and the group of projectivities of a polytope
$P$}, Russian Mathematical Surveys(2001),56(3):582.

\bibitem{Marc} Daniel A.\,Marcus, \textit{Gale diagrams of convex polytopes
and positive spanning sets of vectors}, Discrete Applied
Mathematics, Vol. 9, Issue 1, 1984, pp. 47--67.


\end{thebibliography}
\end{document}